\newtheorem{thm}{Theorem}[section]
\newtheorem{exa}[thm]{Example}
\newtheorem{lem}[thm]{Lemma}
\newtheorem{cor}[thm]{Corollary}
\newtheorem{conj}[thm]{Conjecture}
\newtheorem{ques}{Question}
\newcommand{\Z}{\mathbb{Z}}
\newcommand{\spl}{\mathop{\rm SL}\nolimits}
\begin{document}

\title{Sequences in Dihedral  Groups with \\ Distinct Partial Products }

\author{M.~A.~Ollis\footnote{Email address: \texttt{matt@marlboro.edu}} \\ 
            Marlboro College, P.~O.~Box A, Marlboro, \\ Vermont 05344, USA. }

\date{}
             
\maketitle

\begin{abstract}
Given a subset~$S$ of the non-identity elements of the dihedral group of order~$2m$, is it possible to order the elements of~$S$ so that the partial products are distinct?   This is equivalent to the sequenceability of the group when~$|S| = 2m-1$ and so it is known that the answer is yes in this case if and only if~$m>4$.
We show that the answer is yes when $|S| \leq 9$ and~$m$ is an odd prime other than~3, when~$|S| = 2m-2$ and~$m$ is even or prime, and when~$|S| = 2m-2$ for many instances of the problem when~$m$ is odd and composite. We also consider the problem in the more general setting of arbitrary non-abelian groups and discuss connections between this work and the concept of strong sequenceability.

{\bf Keywords:} Alspach's Conjecture, polynomial method, sequenceable group, strong sequenceability.

\end{abstract}

\section{Introduction}

Let~$G$ be a multiplicatively-written group with identity element~$e$ and let~${\bf g} = (g_1, g_2, \ldots g_k)$ be a sequence of elements of~$G \setminus \{ e \}$.  Define the {\em partial product sequence} of~${\bf g}$ to be~${\bf h} = (h_0, h_1, h_2, \ldots, h_k)$ where~$h_0 = e$ and~$h_i = g_1g_2 \cdots g_i$ for~$1 \leq i \leq k$. 

The following conjecture is investigated in~\cite{CMPP18}:

\begin{conj}\label{conj:abhard}
Let~$G$ be an abelian group and let~$S \subseteq G \setminus \{ e\}$ such that the product of all of the elements in~$S$ is not the identity.  Then there exists an ordering of the elements of~$S$ such that the elements in its partial product sequence are distinct.
\end{conj}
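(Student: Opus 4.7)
The plan is to attack the conjecture in two main stages: attempt Alon's polynomial method on the case $G = \mathbb{Z}_p$, and fall back on an inductive ``remove-and-reinsert'' argument both to extend the base case to general $G$ and to handle the residual configurations.

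Since $S$ is finite we may replace $G$ by the finitely generated abelian subgroup $\langle S \rangle$, and then by the structure theorem it suffices to understand the cyclic-of-prime-power case. I would begin with $G = \mathbb{Z}_p$ for $p$ an odd prime. Here, writing $G$ additively, a valid ordering corresponds to a point $(x_1, \ldots, x_k) \in S^k$ (with $k = |S|$) at which the polynomial
\[
P(x_1, \ldots, x_k) = \prod_{1 \leq i < j \leq k} (x_i - x_j) \cdot \prod_{\substack{0 \leq i < j \leq k \\ (i,j) \neq (0,k)}} \bigl( x_{i+1} + x_{i+2} + \cdots + x_j \bigr)
\]
is nonzero: the Vandermonde factor forces $(x_1, \ldots, x_k)$ to be a permutation of $S$, and the remaining factors force the intermediate partial sums to be pairwise distinct (the $(0,k)$ partial sum is nonzero by hypothesis). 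Applying the Combinatorial Nullstellensatz would require locating a monomial $x_1^{a_1} \cdots x_k^{a_k}$ with nonvanishing coefficient in $P$ and each $a_i \leq k-1$. A crude count gives $\deg P = k^2 - 1$, which narrowly exceeds the allowable total degree $k(k-1) = k^2 - k$, so one must either replace the Vandermonde by a lower-degree encoding of the permutation condition, or exploit the hypothesis $\sum_{s\in S} s \neq 0$ more strongly to delete additional partial-sum factors. This degree deficit is the first genuine obstacle.

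As a more flexible approach I would try induction on $|S|$. For each $s \in S$ with $\bigl(\sum_{t \in S} t\bigr) - s \neq 0$, the inductive hypothesis yields an ordering of $S \setminus \{s\}$ with distinct partial sums $h_0, \ldots, h_{k-1}$; inserting $s$ in position $i$ shifts the partial sums from $h_{i-1}$ onwards by $s$, and a valid insertion exists precisely when $s$ avoids a bounded list of forbidden differences of the form $h_m - h_\ell$. An averaging argument over the pairs $(s,i)$ should succeed comfortably when $|G|$ is much larger than $k$. The main obstacle, and the reason the conjecture appears hard, is the regime where $S$ nearly fills $G$ and carries additive structure (many short zero-sums): here both the averaging and the polynomial-method counting become too tight, and one must argue directly about the allowed configurations. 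Resolving these tight cases, particularly for $G$ of composite order, is precisely where the bulk of the difficulty resides, and it is consistent with the partial results in the dihedral setting that follow.
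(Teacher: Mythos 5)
This statement is a \emph{conjecture}: the paper does not prove it, and it is open in general (it extends Alspach's conjecture; the paper only records the special cases known from the literature, such as $|S|\leq 9$, $|S|\geq |G|-2$ for cyclic $G$, and $|G|\leq 21$). Your proposal is therefore being measured against no proof at all, and it does not itself constitute one --- you candidly flag the two decisive obstacles and leave both unresolved. Concretely: (i) the opening reduction is not valid. Passing to $\langle S\rangle$ is fine, but the structure theorem does not reduce the problem to cyclic groups of prime power order, because an ordering of $S$ inside a direct product does not decompose into independent orderings of the coordinate projections (the projections of $S$ to the factors are multisets that may contain repeats and the identity, and distinctness of the partial-sum tuples cannot be assembled coordinatewise). (ii) Your degree count for the polynomial $P$ is right, $\deg P = k^2-1$ versus the ceiling $k(k-1)$ from the Non-Vanishing Corollary, and this deficit is exactly why the cited work \cite{HOS19} must prune redundant factors and still only reaches $|S|\leq 10$ for cyclic groups of prime order; you do not supply the pruning or the nonvanishing monomial coefficient, which is where all the work lies. (iii) The insertion/averaging induction needs each element $s$ to avoid up to roughly $i(k-i)\approx k^2/4$ forbidden differences, which swamps the $O(k)$ available choices of $(s,i)$ unless $|G|\gg k$; the regime $|S|$ close to $|G|$ is precisely where the conjecture is hard, and you explicitly defer it.

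In short, what you have written is a reasonable survey of attack routes (and the polynomial-method portion is genuinely close in spirit to the techniques the paper deploys for its dihedral results in Section~\ref{sec:polymeth}), but none of the three routes is carried to completion, and the first contains an actual error. The conjecture remains unproven by your argument, as it does in the paper.
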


This conjecture generalises an earlier one of Alspach that was limited to cyclic~$G$.  Conjecture~\ref{conj:abhard} is known to be true in the following cases:
\begin{itemize}
\item when~$|S| \leq 9$ \cite{AL,BH05},
\item when~$|S| \leq 10$ and $G$ is cyclic of prime order \cite{HOS19},
\item when~$|S| = |G|-1$ \cite{Gordon61},
\item when~$|S| = |G|-2$ and $G$ is cyclic \cite{BH05},
\item when~$|S| = |G|-3$ and~$G$ is cyclic of prime order~\cite{HOS19},
\item when~$|G| \leq 21$ \cite{BH05, CMPP18}.
\end{itemize}

In \cite{CMPP18} the question of dropping the requirement that~$G$ be abelian is raised and quickly rejected upon consideration of a counterexample.  The question of which subsets of which non-abelian groups do satisfy the conditions remains, and that is the question we study here.

\begin{ques}\label{ques:nonabhard}
Let~$G$ be a finite group and let~$S$ be a subset of~$G \setminus \{ e \}$ of size~$k$ such that there is some ordering of the elements of~$S$ whose product is not the identity.  Is there an ordering of the elements of~$S$ such that the elements in its partial product sequence are distinct?
\end{ques}

In order to begin to address this, we introduce some notation and terminology.

Let~$G$ be a group of order~$n$.
As in the first paragraph, let~${\bf g} = (g_1, g_2, \ldots, g_k)$ be an arrangement of elements of~$G \setminus \{e\}$ with partial product sequence~${\bf h} = (h_0, h_1, h_2, \ldots, h_k)$.  Suppose the elements of~${\bf g}$ are distinct and let~$S = \{ g_1, g_2, \ldots, g_k \}$.

If the elements of~${\bf h}$ are all distinct then~${\bf h}$ is a {\em basic directed $S$-terrace} for~$G$ and~${\bf g}$ is the associated~{\em $S$-sequencing} of~$G$.  In the case~$k = n-1$ (and so~$S = G \setminus \{ e \}$), ${\bf h}$ is a {\em basic directed terrace} for~$G$ and~${\bf g}$ is the associated~{\em sequencing} of~$G$.   A group with a sequencing is called~{\em sequenceable}.

The study of sequencings in non-abelian groups originated in~\cite{Gordon61} and is surveyed in~\cite{OllisSurvey}.  Note that it is always possible to order the non-identity elements of a non-abelian group to give a non-identity product. 

The three non-abelian groups of orders 6 and~8 are not sequenceable~\cite{Gordon61}, hence the answer to Question~\ref{ques:nonabhard} in the cases when~$S$ contains all of the non-identity elements of such a group is no~\cite{CMPP18}.  {\em Keedwell's Conjecture} is that all other non-abelian groups are sequenceable; that is, that the answer to Question~\ref{ques:nonabhard} is yes when~$S$ contains all of the non-identity elements of a non-abelian group of order at least~10.

In the next section, when cataloguing possible structures of sets for small~$k$,  we see more instances of sets~$S$ for which the answer to Question~\ref{ques:nonabhard} is no.
In Section~\ref{sec:polymeth} we use the Non-Vanishing Corollary to Alon's Combinatorial Nullenstatz to show that the answer is always yes in dihedral groups of order~$2m$ when~$m>3$ is prime and~$|S| \leq 9$.  In Sections~\ref{sec:largek} and~\ref{sec:gr} we show that the answer is yes in dihedral groups of order~$2m$ for~$k \geq 2m-2$ when~$m>4$ is even or prime, and for many instances of the question for composite~$m$.

Alspach and Kalinowski, see~\cite{AL}, have asked a closely related question regarding ``strong sequenceability" of groups.  The main difference to Question~\ref{ques:nonabhard} is that the product of all the elements of a successful ordering is permitted to be the identity (in abelian groups one has no control over this value).  We give the necessary definitions and consider the implications of our work for the strong sequenceability question in Section~\ref{sec:ss}.

\section{Small~$k$, general groups}\label{sec:smallk}

For related conjectures that are more limited in their claims or restricted to abelian groups a case-based approach has been used prove them for small values of~$k$, including Conjecture~\ref{conj:abhard} for~$k \leq 5$~\cite{AL,ADMS,CMPP18}.  In this section we start this process for Question~\ref{ques:nonabhard}.

\begin{thm}\label{th:small_nonabhard}
The answer to Question~\ref{ques:nonabhard} is yes for~$k \leq 4$, with the following two exceptions:
\begin{itemize} 
\item $|S|=4$, with $S = \{ x,x^{-1},y,z\}$ and $xyz  = x^{-1}zy = e$,
\item $|S|=4$, with $S = \{ w,x,y,z \}$ and $wxy=wyz=wzx=xzy=e$.
\end{itemize}
\end{thm}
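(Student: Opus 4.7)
My plan is direct case analysis on $k$, with the main work occurring at $k=4$. For $k\in\{1,2\}$ the statement is essentially trivial: at $k=1$ the partial products are $e,g_1$; at $k=2$, if $g_1g_2=e$ then $g_2=g_1^{-1}$ and the reverse ordering also has product $e$, contradicting the hypothesis. For $k=3$, an ordering $(a,b,c)$ yields distinct partial products iff $ab$, $bc$, and $abc$ are all non-identity. If $S$ contains no inverse pair then the first two conditions hold automatically, so any ordering with non-identity full product (which exists by hypothesis) works; if $S=\{x,x^{-1},y\}$, then $(x,y,x^{-1})$ has partial products $e,x,xy,xyx^{-1}$, which are pairwise distinct because $y\notin\{e,x,x^{-1}\}$. (Three distinct non-identity elements admit at most one inverse pair.)

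For $k=4$, writing $S=\{a,b,c,d\}$, an ordering $(g_1,g_2,g_3,g_4)$ is valid iff none of the three pair-windows $g_ig_{i+1}$, the two triple-windows $g_ig_{i+1}g_{i+2}$, or the full product is the identity. The inverse-pair relation on $S$ is a matching with zero, one, or two edges, giving three subcases. With two inverse pairs, $S=\{x,x^{-1},y,y^{-1}\}$: if $x$ and $y$ commute then every ordering's full product is $e$, so the hypothesis is violated; otherwise $(x,y,x^{-1},y^{-1})$ is valid, with full product the nontrivial commutator $[x,y]$ and each triple-window a conjugate of $y$ or $x^{-1}$. With one inverse pair, $S=\{x,x^{-1},y,z\}$, only the $12$ orderings separating $x$ from $x^{-1}$ are candidates; triple-windows of the form $x\cdot\ast\cdot x^{-1}$ are conjugates and hence safe, and a sweep of the remaining triple-windows (of shapes $xyz$, $yx^{-1}z$, $zx^{-1}y$) and full products should show that all $12$ fail only when, up to swapping $y\leftrightarrow z$, we have $xyz=x^{-1}zy=e$, the first listed exception. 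With no inverse pairs, pair-windows are free and the obstructions are bad $3$-subsets whose product in some cyclic order equals $e$; each blocks a specific family of orderings, and the cyclic identity $pqr=e\iff qrp=e\iff rpq=e$ lets me count how many such obstructions are required to eliminate all $24$ orderings. The minimal pattern consistent with this should reduce, after relabeling, to $wxy=wyz=wzx=xzy=e$.

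The hard part is the bookkeeping in the last two subcases: tracking which products vanish in which orderings, and confirming that no minimal pattern of identity-valued windows other than the two listed produces a uniformly bad family, requires a careful enumeration together with algebraic reductions among the forced identities (for instance, deriving the ``cross'' relation $y^{-1}x^{-1}=xy^{-1}$ implicit in the first exception from its two defining identities). I expect to lean on the cyclic-shift equivalence, on the matching structure of the inverse-pair relation, and on the $y\leftrightarrow z$ symmetry of the setup to keep the case count manageable.
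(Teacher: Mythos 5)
Your framework (the window criterion for distinct partial products, and the case split on the inverse-pair matching) is the same as the paper's, and your arguments for $k\le 3$ and for the subcase $S=\{x,x^{-1},y,y^{-1}\}$ are complete and correct. But the two subcases where the exceptions actually live are left as unexecuted plans: ``a sweep \ldots should show'' and ``the minimal pattern \ldots should reduce, after relabeling, to'' are exactly the content of the theorem, not bookkeeping to be deferred. For $S=\{x,x^{-1},y,z\}$ the paper does not sweep all $12$ orderings; it normalizes to $xyz=e$ (using cyclic shifts and the $x\leftrightarrow x^{-1}$, $y\leftrightarrow z$ symmetries) and then tests the single ordering $(x,y,x^{-1},z)$, observing that its full product cannot be $e$ (that would force $x^{-1}=e$ given $xyz=e$) and that its only non-conjugate triple window is $yx^{-1}z$, so the lone surviving obstruction is $yx^{-1}z=e$, i.e.\ the first exception. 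You would need to carry out an equivalent reduction; as written, nothing certifies that no other combination of relations kills all $12$ orderings.

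Two further concrete problems with the $\{w,x,y,z\}$ plan. First, you state the correct validity criterion (pairs, triples, \emph{and} full product) but then declare that ``the obstructions are bad $3$-subsets''; full-product relations such as $yzxw=e$ are genuine obstructions here and must enter the count --- the paper's chain of test orderings $(y,z,x,w)$, $(x,w,y,z)$, $(w,x,z,y)$, $(z,w,y,x)$, $(w,z,x,y)$ is built precisely to handle them. Second, ``find the minimal pattern of obstructions that eliminates all $24$ orderings'' is not sufficient: you must also show that every \emph{other} pattern large enough to block all orderings either contains a listed exception or is algebraically inconsistent (forces the generated subgroup to be trivial, or abelian, reducing to the known abelian case). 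Deciding which sets of word relations are contradictory is not routine in a non-abelian group; the paper explicitly offloads this to GAP. Without that step your enumeration could miss a third exceptional configuration or wrongly include a vacuous one.
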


\begin{proof}
We consider potential $S$-sequencings and break into cases depending on what might cause a given sequence not to be an $S$-sequencing.  Those causes will be subsequences of elements whose product is the identity.  An added wrinkle compared to the abelian case is that it is not immediately obvious when a set of words which are the identity imply a contradiction.  We use the group theory software package GAP~\cite{GAP} to determine when a set of such words implies that either the subgroup generated by the elements in question is trivial (which will imply a contradiction) or abelian (in which case the problem is reduced to one already solved in~\cite{ADMS}).  We frequently use that if $sx = e$ for some string of group elements $s$ then also $xs = e$.

When~$k \in \{ 1,2 \}$ the result is immediate.  For~$k=3$ let~$S = \{ g_1, g_2, g_3 \} \subseteq G \setminus \{ e \}$ and suppose that $g_1g_2g_3 \neq e$.  If $(g_1, g_2, g_3)$ is not an $S$-sequencing then it must be that either~$g_2 = g_1^{-1}$ or $g_2 = g_3^{-1}$.  In the former case $(g_1, g_3, g_2)$ is an $S$-sequencing, in the latter case $(g_2, g_1, g_3)$ is.

For the~$k=4$ case, we consider~$S$ of three forms: $\{x,x^{-1},y,y^{-1}\}$, $\{x, x^{-1}, y, z \}$ and $\{w,x,y,z \}$ where any elements that are inverses are so indicated.
  
If~$S$ is of the first form then without loss of generality a sequence whose product is not the identity is $(x,y,x^{-1},y^{-1})$. This is an $S$-sequencing as the product of none of the two- or three-element subsequences is the identity.

Next consider~$S$ of the form~$\{x, x^{-1}, y, z \}$.  Conjugates of $y$ or $z$ by $x$ cannot be the identity. We know there is one ordering whose product is not the identity.  If this is not an $S$-sequencing, then there must be a sequence of length~3 of elements of $x^{\pm 1}$, $y$ and $z$ whose product is the identity.   Without loss of generality, assume $xyz=e$.  Now consider the ordering $(x,y,x^{-1},z)$.  The product of all four cannot be the identity as this implies $x^{-1}=e$.  The only way that this cannot be an $S$-sequencing is if $yx^{-1}z = e$, which gives the first exception in the statement of the theorem.

Finally, consider $S = \{ w,x,y,z\}$.  Assume without loss of generality that $wxy=e$.  If $(y,z,x,w)$ is not an $S$-sequencing then either $zxw=e$ or $yzxw=e$.  In the former case, considering $(x,w,y,z)$ and then $(w,x,z,y)$ leads to either a successful $S$-sequencing or the second  exception in the statement of the theorem.  In the latter case, $(z,w,y,x)$ fails to be an $S$-sequencing only if $zwyx=e$, in which case $(w,z,x,y)$ is an $S$-sequencing.
\end{proof}

Both exceptions in Theorem~\ref{th:small_nonabhard} are necessary. To see this for the first, let
$$D_{2m} = \langle u,v : u^m = e = v^2, vu = u^{m-1}v \rangle$$
be the dihedral group of order~$2m$.
Consider 
$S =  \{ u, u^2, v, u^2v \} \subseteq D_6$. It is straightforward to check there is no $S$-sequencing.
 
For the second, let $\spl(2,3)$ be the special linear group of $2 \times 2$ matrices with determinant~1 over the field with three elements. Then there is no $S$-sequencing for
$$S = \left\{ \begin{pmatrix}  
                   0 & 1 \\
                   2 & 1
                   \end{pmatrix}  ,
                   \begin{pmatrix}  
                   1 & 1 \\
                   2 & 0 
                   \end{pmatrix}  ,
                   \begin{pmatrix}  
                   2 & 0 \\
                   2 & 2 
                   \end{pmatrix}  ,
                   \begin{pmatrix}  
                   2 & 1 \\
                   0 & 2 
                   \end{pmatrix}     \right\}$$
in~$\spl(2,3)$.

These two exceptions, and the three with sizes~5 and~7 that come from the non-sequenceable groups of orders~6 and~8, are not the only instances of sets~$S$ that do not admit an $S$-sequencing.  For example, in~$D_8$ the set
$$S = \{ u^2, v,uv,u^2v,u^3v \}$$
of size 5 does not have an $S$-sequencing.  

As a further example, let $Q_8$ be the quaternion group of order~8 and let~$z$ be its unique involution.  Then~$S = Q_8 \setminus \{ e,z \}$ is a set of size~6 with no $S$-sequencing.

\section{Small~$k$, dihedral groups}\label{sec:polymeth}

Before moving to the main method of the section, we give a general construction that works in all dihedral groups and obviates the need for the most computationally intensive case in Theorem~\ref{th:polymeth}, the main result of this section.

Let~$C_m = \langle u \rangle$, a normal cyclic subgroup of~$D_{2m}$ of order~$m$, and let~$C_m v$ be its coset.

\begin{lem}\label{lem:r0}
If $S \subseteq C_m v \subseteq D_{2m}$, then $D_{2m}$ has an $S$-sequencing.
\end{lem}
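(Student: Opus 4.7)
The plan is to produce an explicit $S$-sequencing by listing the elements of $S$ in increasing order of their $u$-exponent. Write $S = \{u^{a_1}v, u^{a_2}v, \ldots, u^{a_k}v\}$ with $0 \leq a_1 < a_2 < \cdots < a_k < m$, and set $g_i = u^{a_i}v$. The defining relation $vu = u^{-1}v$ gives the multiplication rule $(u^av)(u^bv) = u^{a-b}$, so every element of $C_m v$ is an involution and a short induction shows that the partial products have the form
\[
h_{2j} = u^{c_{2j}} \in C_m, \qquad h_{2j+1} = u^{c_{2j+1}} v \in C_m v,
\]
where $c_i = a_1 - a_2 + a_3 - \cdots \pm a_i$ is the alternating sum of the first $i$ exponents.

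Because even-indexed partial products lie in $C_m$ and odd-indexed ones in the coset $C_m v$, distinctness across parities is free. It therefore suffices to check distinctness within each parity class. For odd indices, $c_{2j+1} - c_{2j-1} = a_{2j+1} - a_{2j} > 0$, so $(c_{2j+1})$ is strictly increasing. Grouping the alternating sum as $c_{2j+1} = a_1 + (a_3 - a_2) + (a_5 - a_4) + \cdots + (a_{2j+1} - a_{2j})$ shows $c_{2j+1} \geq a_1 \geq 0$, and regrouping as $c_{2j+1} = a_{2j+1} + (a_{2j-1} - a_{2j}) + \cdots + (a_1 - a_2)$ shows $c_{2j+1} \leq a_{2j+1} \leq m-1$. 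Thus the odd-index exponents form a strictly increasing sequence inside $[0,m-1]$, so they are distinct modulo~$m$.

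An analogous argument handles the even indices: $(c_{2j})$ is strictly decreasing with $c_0 = 0$ and satisfies $-(m-1) \leq c_{2j} \leq 0$, so these exponents are again distinct modulo~$m$. For $j \geq 1$ the bound $c_{2j} \leq -(a_2 - a_1) < 0$ also ensures $h_{2j} \neq e = h_0$. Combining the two monotonicity statements with the coset observation gives distinctness of all partial products, completing the proof.

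I do not anticipate a significant obstacle: the coset $C_m v$ has such a clean multiplication rule (every product of two elements lands in $C_m$) that sorting by exponent automatically makes the relevant alternating sums monotone within a window of length less than $m$. The only care required is the pair of telescoping bounds that keep the alternating sums inside $(-m, 0]$ and $[0, m)$ respectively.
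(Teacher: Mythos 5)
Your proof is correct and is essentially the paper's own argument: sort the elements of $S$ by exponent and observe that the alternating partial sums are monotone within each coset and confined to an interval of length less than $m$. The only cosmetic difference is that you sort the exponents in increasing order where the paper sorts them in decreasing order, and you spell out the telescoping bounds that the paper leaves implicit.
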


\begin{proof}
Let $S = \{ u^{a_i}v : 1 \leq i \leq k \}$ with $a_1 > \cdots > a_k$.  Then the sequence $(u^{a_1}v,  \ldots, u^{a_k}v)$ is an $S$-sequencing with partial products
$$( e, u^{a_1}v, u^{a_1 - a_2}, u^{a_1-a_2+a_3}v, u^{a_1-a_2+a_3-a_4}, \ldots ).$$
The ordering of the~$a_i$ guarantees that these elements are distinct.
\end{proof}

We now pursue a new approach that applies specifically to dihedral groups of order twice an odd prime.   It uses Alon's Non-Vanishing Corollary and we are able answer Question~\ref{ques:nonabhard} for these groups up to~$k = 9$.  Perhaps more importantly, it embodies an approach that could plausibly be extended to all values of~$k$ in these groups.  This generalises a method developed for cyclic groups in~\cite{HOS19}.

The Non-Vanishing Corollary was introduced in~\cite{Alon99}; for a short direct proof see~\cite{Michalek10}.

\begin{thm}\label{th:pm}{\rm (Non-Vanishing Corollary)}
Let ~$F$ be an arbitrary field, and let $f = f(x_1, x_2, \ldots, x_k)$ be a polynomial in~$F[x_1, x_2, \ldots, x_k]$.  Suppose the degree~$deg(f)$ of~$f$ is $\sum_{i=1}^k \gamma_i$, where each~$\gamma_i$ is a nonnegative integer, and suppose the coefficient of~$\prod_{i=1}^r x_i^{\gamma_i}$ in~$f$ is nonzero.  Then if~$A_1, A_2, \ldots, A_k$ are subsets of~$F$ with~$|A_i| > \gamma_i$, there are $a_1 \in A_1, \ldots, a_k \in A_k$ so that~$f(a_1, a_2, \ldots, a_k) \neq 0$.
\end{thm}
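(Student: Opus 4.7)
The plan is to deduce the Non-Vanishing Corollary from Alon's Combinatorial Nullstellensatz, which in this context says: if $f \in F[x_1,\ldots,x_k]$ vanishes on every point of the box $A_1 \times \cdots \times A_k$ with $|A_i| = \gamma_i + 1$, then $f$ can be written as $\sum_{i=1}^k h_i g_i$, where $g_i(x_i) = \prod_{a \in A_i}(x_i - a)$ and $\deg h_i \leq \deg f - \deg g_i$. I would invoke this Nullstellensatz as the main tool; the Non-Vanishing Corollary is then essentially bookkeeping.

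With the Nullstellensatz in hand I would argue by contradiction. First, shrink each $A_i$ to a subset of size exactly $\gamma_i + 1$, which is permitted since $|A_i| > \gamma_i$ and passing to a smaller set can only strengthen the ``vanishes on the box'' hypothesis while keeping the conclusion the same. Assume $f$ vanishes on this shrunken box, write $f = \sum h_i g_i$ as above, and compare coefficients of the monomial $\prod_{j=1}^k x_j^{\gamma_j}$ on the two sides. Each $g_i$ is monic in $x_i$ of degree $\gamma_i + 1$, so every monomial appearing in $h_i g_i$ has $x_i$-degree at least $\gamma_i + 1$, whereas the target monomial has $x_i$-degree only $\gamma_i$. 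Hence no summand can contribute to $\prod_j x_j^{\gamma_j}$, so its coefficient on the right is zero, contradicting the hypothesis that it is nonzero on the left.

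The main obstacle is really the Combinatorial Nullstellensatz itself; the extraction above is routine. If a self-contained derivation were required, the natural route is iterated polynomial division: repeatedly replace occurrences of $x_i^{\gamma_i + 1}$ using the relation $g_i(x_i) \equiv 0$ on $A_i$ to produce $\tilde f \equiv f \pmod{(g_1, \ldots, g_k)}$ satisfying $\deg_{x_i} \tilde f \leq \gamma_i$ for each $i$. Such an $\tilde f$ still vanishes on the entire box, and a short induction on $k$, using in the base case that a univariate polynomial of degree at most $\gamma_1$ with $\gamma_1 + 1$ distinct roots is zero, forces $\tilde f = 0$. Tracking degrees through the reduction yields the bound $\deg h_i \leq \deg f - \deg g_i$ needed in the main argument. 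The only genuinely delicate point is checking that this degree bookkeeping is compatible with the degree hypothesis $\deg f = \sum \gamma_i$, which is what makes the coefficient comparison in the second paragraph decisive.
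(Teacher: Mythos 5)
The paper does not prove this statement at all: it imports the Non-Vanishing Corollary from Alon's paper and points to Micha{\l}ek for a short direct proof, so there is no in-paper argument to compare against. Your route---deduce it from the Combinatorial Nullstellensatz by shrinking each $A_i$ to size $\gamma_i+1$, assuming $f$ vanishes on the box, writing $f=\sum_i h_ig_i$ with $\deg h_i\leq \deg f-\deg g_i$, and extracting the coefficient of $\prod_j x_j^{\gamma_j}$---is exactly Alon's original derivation, so the strategy is the right one.

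However, the justification you give for the decisive step is wrong as written. You claim that because $g_i$ is monic in $x_i$ of degree $\gamma_i+1$, ``every monomial appearing in $h_ig_i$ has $x_i$-degree at least $\gamma_i+1$.'' That is false: $g_i(x_i)=\prod_{a\in A_i}(x_i-a)$ has lower-order terms (already for $k=1$, $h_1=1$ and $g_1=x_1^2-1$ the product contains the constant monomial $-1$), so $h_ig_i$ will in general contain plenty of monomials of small $x_i$-degree. The correct argument must use the total degree, which is precisely the bookkeeping you defer to your last sentence but never actually perform: since $\deg h_i\leq\deg f-(\gamma_i+1)$, the product $h_ig_i$ has total degree at most $\deg f=\sum_j\gamma_j$, and the monomial $\prod_jx_j^{\gamma_j}$ has total degree exactly $\deg f$; hence any occurrence of it in $h_ig_i$ must come from the top homogeneous part of $h_i$ times the top term $x_i^{\gamma_i+1}$ of $g_i$, and would therefore be divisible by $x_i^{\gamma_i+1}$, which $\prod_jx_j^{\gamma_j}$ is not. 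With that sentence substituted for your false one, the proof is complete and standard; without it, the key coefficient comparison is unsupported.
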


For~$m$ prime, we will take~$F$ to be~$\Z_m$, the integers modulo~$m$ considered as a field.   We need  a polynomial that is non-zero if and only if we feed it a solution to the problem.  Then, if we can find a monomial with the required properties in relation to our set-up, the Non-Vanishing Corollary will give a positive answer to that instance of Question~\ref{ques:nonabhard}.

Suppose our set~$S \subseteq D_{2m} \setminus \{ e \}$ has~$r$ elements in~$C_m$ and~$s$ elements in~$C_m v$.  We look for a solution of a particular form, which varies slightly with the parity of~$s$.  Before considering the general case, we look at a small example.

\begin{exa}\label{ex:h32}
Consider the case when~$|S|=5$  with three elements in~$C_m \setminus \{ e \}$ and two in $C_m v$.  There are various ways in which we might arrange such elements to look for a sequence with distinct partial products.  

One potential form for a successful sequence is
$$( u^{x_1}, u^{y_1}v, u^{x_2}, u^{x_3}, u^{y_2}v ),$$
which has partial products
$$( e, u^{x_1}, u^{x_1+y_1}v, u^{x_1 + y_1 - x_2}v, u^{x_1 + y_1 - x_2 - x_3}v, u^{x_1 + y_1 - x_2 - x_3 - y_2} ).$$

Consider the polynomial
$$(x_2-x_1)(x_3-x_1)(x_3-x_2)(y_2-y_1)(x_1+y_1-x_2-x_3-y_2)(y_1-x_2-x_3-y_2)(-x_2-x_3)$$
(which will be called~$\pi_{3,2}$ in the general method described in the remainder of this section).  This polynomial is nonzero if and only if our sequence has the elements it must have and has distinct partial products.

To apply the Non-Vanishing Corollary we need a monomial that divides $x_1^2x_2^2x_3^2y_1y_2$ with a non-zero coefficient.  The polynomial has degree~7, so this is plausible to ask for.  Indeed, there is one: $x_1^2x_2x_3^2y_1y_2$, which has coefficient~$6$.

Hence for prime~$m>3$ (to which~$6$ is coprime) every subset~$S$ of size~$5$ of~$D_{2m}$ that has  three elements in~$C_m \setminus \{ e \}$ and two in $C_m v$ has an~$S$-sequencing.
\end{exa}

For the general case, we need to arrange the $r$ elements of $C_m$ and $s$ elements of $C_m v$.   There are many potentially-successful ways to do this.  For a given form, we can get some crude information about how likely it is that it is possible to assign the elements so that the partial products are distinct based on our  knowledge of which coset each of the partial products is in.  

Looking at that list of partial products, a hard rule is that we cannot have more than $m$ of the products in~$C_m$ or  in~$C_m v$.  Two softer rules that guided the choice are to a) try to make the number of elements in the partial product in each coset roughly equal to each other (which tends to lower the degree of the polynomial under consideration compared to other options), and to b) mimic patterns that have been successful in finding sequencings for dihedral groups (that is, the case $r=m-1$ and $s=m$).

We first consider the case when~$s$ is odd.  In this case use the form
$$(u^{x_1}, u^{x_2} ,\ldots, u^{x_{\lfloor r/2 \rfloor} }, u^{y_1}v, u^{y_2}v, \ldots, u^{y_{s}}v, u^{x_{\lfloor r/2 \rfloor + 1}}, u^{x_{\lfloor r/2 \rfloor + 2}}, \ldots, u^{x_{r}}).$$
The sequence of partial products is
\begin{flushleft}
$( u^{z_0}, u^{z_1}, \ldots, u^{z_{\lfloor r/2 \rfloor} }, u^{t_1}v,  u^{z_{\lfloor r/2 \rfloor+1} }, u^{t_2}v, \ldots, u^{z_{\lfloor r/2 \rfloor+(s-1)/2} },$
\end{flushleft}
\begin{flushright}
$  u^{t_{(s+1)/2 } }v, u^{t_{(s+1)/2 + 1} }v , \ldots, u^{t_{(s+1)/2 + \lceil r/2 \rceil  }   }v  ). $
\end{flushright}

Each~$z_i$ and~$t_i$ is a linear combination of the~$x_i$ and~$y_i$ with all coefficients~$\pm 1$.   Let~$r = 2p+ \delta$, for~$\delta \in \{ 0,1 \}$, and~$s = 2q+1$.  Then
$$
z_i=
\begin{cases}
0  \mbox{\ \  if } i=0   \\
x_1 + x_2 + \cdots x_i \mbox{\ \  if }1 \leq i \leq p \\
z_{p} + y_1 - y_2 + y_3 - y_4 + \cdots + y_{2(i-p) - 1} - y_{2(i-p)} \mbox{\ \  if }p+1 \leq i \leq p+q
\end{cases}
$$
and
$$
t_i=
\begin{cases}
z_p + y_1 -  y_2 + y_3 - y_4 + y_5 + \cdots - y_{2(i-1)} + y_{2i-1} \mbox{\ \  if } i \leq q+1 \\
t_{q+1} - x_{p+1} - x_{p+2} - \cdots - x_{i - q + p -1}      \mbox{\ \  if } q+2 \leq i \leq p+q+1+\delta.   
\end{cases}
$$

The polynomial
$$\prod_{1 \leq i < j \leq 2p} (x_j - x_i)  \prod_{1 \leq i < j \leq 2q+1}  (y_j - y_i)
       \prod_{0 \leq i < j \leq p+q} (z_j - z_i)  \prod_{1 \leq i < j \leq p+q+1} (t_j - t_i).$$ 
is not~0 if and only if the assignment to the variables~$x_1, \ldots, x_r, y_1, \ldots, y_s$ solves our problem.  The Non-Vanishing Corollary is generally easier to apply when the polynomial in question has a lower degree.  To this end, we look to remove some redundant factors.  

We know that~$e \not\in S$, and the~$(y_{i+1} - y_{i})$ terms guarantee that there can be no adjacent repeated involutions in the sequence.  This implies that~$z_{i+1} \neq z_i$ and~$t_{i+1} \neq t_i$ for each~$i$.  So define~$\pi_{r,s}$ for~$r = 2p+ \delta$, for~$\delta \in \{ 0,1 \}$, and~$s = 2q+1$ by:
$$\pi_{r,s} = \prod_{1 \leq i < j \leq 2p+\delta} (x_j - x_i)  \prod_{1 \leq i < j \leq 2q+1}  (y_j - y_i)
       \prod_{\substack{0 \leq i < j \leq p+q \\ j \neq i+1 }} (z_j - z_i)  \prod_{\substack{1 \leq i < j \leq p+q+1+\delta \\ j \neq i+1}} (t_j - t_i).$$ 
This is the polynomial to which we shall apply the Non-Vanishing Corollary.

If~$s=0$ then the problem reduces to one in the cyclic group~$C_m$, and is addressed in~\cite{HOS19}.
If~$s \geq 2$ is even we look for a solution of the form
$$(u^{x_1}, u^{x_2} ,\ldots, u^{x_{\lfloor r/2 \rfloor} }, u^{y_1}v, u^{y_2}v, \ldots, u^{y_{s-1}}v, u^{x_{\lfloor r/2 \rfloor + 1}}, u^{x_{\lfloor r/2 \rfloor + 2}}, \ldots, u^{x_{r}}, u^{y_{s}}v  ).$$
That is, we adjoin the additional element of~$C_mv$ to the end of the form used for odd~$s$.
The sequence of partial products is
\begin{flushleft}
$( u^{z_0}, u^{z_1}, \ldots, u^{z_{\lfloor r/2 \rfloor} }, u^{t_1}v,  u^{z_{\lfloor r/2 \rfloor+1} }, u^{t_2}v, \ldots, u^{z_{\lfloor r/2 \rfloor+(s-2)/2} },$
\end{flushleft}
\begin{flushright}
$  u^{t_{s/2 } }v, u^{t_{s/2 + 1} }v , \ldots, u^{t_{s/2 + \lceil r/2 \rceil  }   }v, u^{z_{\lfloor r/2 \rfloor+s/2 } } ).  $
\end{flushright} 

Let~$r = 2p+\delta$, with~$\delta \in \{ 0, 1 \}$ as before, and set~$s = 2q+2$.
Then~$z_i$ and~$t_i$ have the same values as before with the addition that~$ z_{p+q+1} =  t_{p+q+1 + \delta} - y_{2q+2} $.
If~$r \neq 0$, let~$\pi_{r,s}$ be 
$$(z_{p+q+1} - z_{p+q}) \prod_{1 \leq i < j \leq 2p + \delta} (x_j - x_i)  \prod_{1 \leq i < j \leq 2q+2}  (y_j - y_i)
       \prod_{\substack{0 \leq i < j \leq p+q+1 \\ j \neq i+1 }} (z_j - z_i)  \prod_{\substack{1 \leq i < j \leq p+q+1 + \delta \\ j \neq i+1}} (t_j - t_i)   $$ 
Again $\pi_{r,s}(x_1, \ldots, x_r, y_1, \ldots, y_s)  \neq 0$ if and only if $x_1, \ldots, x_r, y_1, \ldots, y_s$ gives a solution to the problem.   

For completeness, note that when~$r=0$ and~$s$ is even we have $z_{p+q+1} - z_{p+q} = y_{s-1} - y_s$ and hence we can omit the factor $(z_{p+q+1} - z_{p+q})$ in this case as the negative of this factor is included in the second product.  However, this case is covered by Lemma~\ref{lem:r0} anyway.

As~$\pi_{r,s}$ is homogeneous, any monomial where the exponent on each~$x_i$ is less than~$r$ and the exponent on each~$y_j$ is less than~$s$ is suitable for use in the Non-Vanishing Corollary.

\begin{thm}\label{th:polymeth}
Let~$m>3$ be an odd prime.  If~$S \subseteq D_{2m} \setminus \{ e \}$ with~$|S| \leq 9$ then $D_{2m}$ has an~$S$-sequencing.
\end{thm}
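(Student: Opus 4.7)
The plan is to apply the Non-Vanishing Corollary to the polynomials $\pi_{r,s}$ just constructed, following the template of Example~\ref{ex:h32}. Given $S \subseteq D_{2m} \setminus \{ e \}$ with $|S| \leq 9$, let $r = |S \cap C_m|$ and $s = |S \cap C_m v|$, so $r + s \leq 9$. The boundary cases are already resolved: $r = 0$ by Lemma~\ref{lem:r0}, and $s = 0$ by the cyclic-group result of~\cite{HOS19}, since a $C_m$-sequencing of $S$ is automatically a $D_{2m}$-sequencing when $S \subseteq C_m$. Thus one may assume $r, s \geq 1$, leaving the finitely many pairs $(r, s)$ with $r, s \geq 1$ and $r + s \leq 9$ (a total of $36$) to be analysed.

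For each such pair, the plan is to expand $\pi_{r, s} \in \Z[x_1, \ldots, x_r, y_1, \ldots, y_s]$ using a computer algebra system and search for a monomial $\prod_i x_i^{\alpha_i} \prod_j y_j^{\beta_j}$ of total degree $\deg \pi_{r, s}$ satisfying $\alpha_i \leq r - 1$ and $\beta_j \leq s - 1$, whose integer coefficient is not a multiple of $m$. Given such a monomial, Theorem~\ref{th:pm} applied over $\Z_m$ with $A_i = \{\log_u g : g \in S \cap C_m\}$ of size $r > \alpha_i$ and $B_j = \{\log_u(gv) : g \in S \cap C_m v\}$ of size $s > \beta_j$ yields a nonzero evaluation of $\pi_{r, s}$; the Vandermonde-like factors $(x_j - x_i)$ and $(y_j - y_i)$ then force the resulting exponents to give a bijection onto each coset portion of $S$, producing the required $S$-sequencing.

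The main obstacle is the monomial search: the coefficient, which is an integer depending only on $(r, s)$, must be nonzero modulo every admissible prime. The cleanest outcome is to find, for each $(r,s)$, a monomial whose coefficient has all prime factors in $\{2, 3\}$ — as with the coefficient $6$ in Example~\ref{ex:h32} — since such a monomial works uniformly for every prime $m > 3$. When no such universal monomial exists, the remedy is either to select different monomials for different residues of $m$, or to dispatch the handful of offending small primes by direct computer search within the correspondingly small groups $D_{2m}$. I expect the pairs with $r + s \in \{8, 9\}$ to be the most demanding, both because $\pi_{r, s}$ has the largest degree and because the exponent bounds $\alpha_i \leq r - 1$, $\beta_j \leq s - 1$ are tightest there; the proof should then reduce to a table giving, for each pair $(r, s)$, one suitable monomial and its coefficient, together with a short verification that any awkward small primes are covered by direct enumeration.
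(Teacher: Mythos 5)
Your proposal matches the paper's proof essentially exactly: the paper likewise reduces to the pairs $(r,s)$ with $r,s\geq 1$ via Lemma~\ref{lem:r0} and the cyclic result of~\cite{HOS19} (dispatching $|S|\leq 4$ by Theorem~\ref{th:small_nonabhard} rather than by more tables), and then records for each remaining pair a computer-found monomial of~$\pi_{r,s}$ with exponents below $r$ (resp.~$s$) whose coefficient has only the prime factors $2$, $3$ and, in one case, $5$. The fallbacks you anticipate are not needed, since the single case with a factor of $5$ has $s=8$ and so only arises when $m\geq 11$.
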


\begin{proof}

If~$|S| \leq 4$ then the result follows from Theorem~\ref{th:small_nonabhard}.
For larger values of~$|S|$, let~$r$ be the number of elements of~$S$ that are in~$C_m$ and~$s=|S|-r$ the number that are in~$C_m v$. 
If~$r=0$ then the result follows from Lemma~\ref{lem:r0}.
If~$r = |S|$ then, as noted earlier, the result follows from the cyclic group version of the conjecture which is proved for~$|S| \leq 10$  for prime~$m$ in~\cite{HOS19}.    

For each~$r$ with~$1 \leq r \leq |S|-1$ we apply the method of Example~\ref{ex:h32}.  
Tables~\ref{tab:k5}--\ref{tab:k9} collect the pertinent information concerning the polynomial~$\pi_{r,s}$ and one of its monomials for $|S|$ from~5 through to~9.  
\end{proof}

\begin{table}
\caption{Some details of~$\pi_{r,s}$ for~$|S|=5$}\label{tab:k5}
$$
\begin{array}{rrrrr}
\hline
r & \deg(\pi) & {\rm monomial} & {\rm coefficient} & {\rm prime \ factors} \\
\hline
1 & 9 & y_1^3y_3^3y_4^3 & 4  & 2 \\
2 & 6 & x_2y_1y_2^2y_3^2 & -3  & 3 \\
3 & 7 & x_1^2x_2x_3^2y_1y_2 & 6 & 2,3 \\ 
4 & 8 & x_2^3x_3^2x_4^3 & 1 & - \\
\hline
\end{array}
$$
\end{table}

\begin{table}
\caption{Some details of~$\pi_{r,s}$ for~$|S|=6$}\label{tab:k6}
$$
\begin{array}{rrrrr}
\hline
r & \deg(\pi) & {\rm monomial} & {\rm coefficient}  & {\rm prime \ factors} \\
\hline
1 & 14 & y_2^2y_3^4y_4^4y_5^4 & -4 & 2 \\
2 & 12 & x_2y_1^3y_2^2y_3^3y_4^3 & 16 & 2 \\
3 & 10 & x_1^2x_3^2y_1^2y_2^2y_3^2 & -3 & 3 \\
4 & 12 & x_1^3x_2^3x_3^2x_4^3y_2 & -4 & 2 \\
5 & 14 & x_1^2x_2^4x_4^4x_5^4 & -2 & 2 \\
\hline
\end{array}
$$
\end{table}

\begin{table}
\caption{Some details of~$\pi_{r,s}$ for~$|S|=7$}\label{tab:k7}
$$
\begin{array}{rrrrr}
\hline
r & \deg(\pi) & {\rm monomial} & {\rm coefficient}  & {\rm prime \ factors} \\
\hline
1 & 22 & y_2^5y_3^5y_4^5y_5^5y_6^2 & -16 & 2 \\
2 & 17 & x_2y_2^4y_3^4y_4^4y_5^4 & -4 & 2 \\
3 & 16 & x_1^2x_3^2y_1^3y_2^3y_3^3y_4^3 & 32 & 2 \\
4 & 16 & x_1^3x_2^3x_3^3x_4^3y_2y_3^2 & 2 & 2 \\
5 & 18 & x_1^4x_2^4x_4^4x_5^4y_1y_2 & 12 & 2,3\\
6 & 21 & x_2^5x_3^5x_4x_5^5x_6^5 & -2 & 2  \\
\hline
\end{array}
$$
\end{table}

\begin{table}
\caption{Some details of~$\pi_{r,s}$ for~$|S|=8$}\label{tab:k8}
$$
\begin{array}{rrrrr}
\hline
r & \deg(\pi) & {\rm monomial} & {\rm coefficient}  & {\rm prime \ factors} \\
\hline
1 & 30 & y_2y_3^6y_4^5y_5^6y_6^6y_7^6 & -64 & 2 \\
2 & 26 & x_2y_2^5y_3^5y_4^5y_5^5y_6^5 & -72 & 2,3  \\
3 & 22 &   x_1^2x_2^2x_3^2y_1^4y_3^4y_4^4y_5^4 & -1 & - \\
4 & 23 &    x_1^2x_2^2x_3^3x_4^3y_1^3y_2^3y_3^3y_4^3 & -48 & 2,3 \\
5 & 24 & x_1^4x_2^4x_3^3x_4^4x_5^4y_2y_3^2  & -3 & 3 \\
6 & 26 & x_1^4x_2^5x_3^5x_4x_5^5x_6^5y_2 & 48 & 2,3 \\
7 & 30 & x_2^6x_3x_4^5x_5^6x_6^6x_7^6 & 1 & - \\
\hline
\end{array}
$$
\end{table}

\begin{table}
\caption{Some details of~$\pi_{r,s}$ for~$|S|=9$}\label{tab:k9}
$$
\begin{array}{rrrrr}
\hline
r & \deg(\pi) & {\rm monomial} & {\rm coefficient}  & {\rm prime \ factors} \\
\hline
1 &  41  &  y_2y_3^7y_4^7y_5^7y_6^7y_7^6y_8^6      & 720  & 2,3,5 \\
2 &  34 &   x_2y_2^6y_3^6y_4^6y_5^6y_6^5y_7^4      & -512 & 2 \\
3 & 31 &  x_1x_2^2x_3^2y_1^5y_2^5y_3^5y_4y_5^5y_6^5     & -384  & 2,3 \\
4 &  28  & x_1^3x_2^3x_3^3x_4^3y_1^4y_2y_3^4y_4^3y_5^4 & 12 & 2,3 \\
5 &  29 & x_1^3x_2^4x_3^3x_4^4x_5^4y_1^3y_2^3y_3^3y_4^2 & 8 & 2\\
6 & 30 & x_1x_2^5x_3^5x_4^5x_5^5x_6^5y_1^2y_3^2 & -16 & 2 \\
7 & 35 & x_1^5x_2^6x_3^6x_5^5x_6^6x_7^6y_2 & 64 & 2 \\
8 & 40 & x_2^7x_3x_4^7x_5^4x_6^7x_7^7x_8^7 & -3 & 3 \\
\hline
\end{array}
$$
\end{table}

Note that in the proof of Theorem~\ref{th:polymeth} when~$|S|=5$ and~$r=2$ the coefficient is not coprime to~3 (and neither is the coefficient on any other viable monomial).  This is necessarily the case as~$D_6$ is not sequenceable.




In all cases in the proof of Theorem~\ref{th:polymeth} there were many monomials with non-zero coefficients.  The ones in the tables were chosen to have only small prime factors.  Other monomials could have been used in combination, provided that their greatest common divisor has only small prime factors.  A more general theoretical approach to solving the problem by finding monomial coefficients might take advantage of this.

Thus the answer to Question~\ref{ques:nonabhard} is yes for~$|S| \leq 9$ in dihedral groups of order twice a prime, with the exceptions noted for~$D_6$ and $|S| \in \{4,5\}$ in the previous section.  For~$D_{10}$ this answers the question completely.  A deeper understanding of~$\pi_{r,s}$ is a conceivable route to removing (or weakening) this condition on~$S$.

\section{Large~$k$}\label{sec:largek}

In this section we consider subsets of $D_{2m} \setminus \{ e \}$ of size at least~$2m-2$. An affirmative answer to Question~\ref{ques:nonabhard} for~$k = 2m-1$ and $m \geq 5$ follows immediately from known constructions:

\begin{thm}\label{th:dihed_n-1}{\rm \cite{Isbell90, Li97}}
The dihedral group of order~$n$ is sequenceable if and only if~$n \geq 10$. 
\end{thm}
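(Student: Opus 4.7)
The theorem splits into a negative part, that $D_6$ and $D_8$ admit no sequencing, and a positive part, that $D_{2m}$ is sequenceable for every $m \geq 5$; I would handle these separately.

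For the negative part I would run a finite case check. In any sequencing of $D_{2m}$, the partial products must exhaust the group, and each reflection in the sequence toggles the coset of the running product, so exactly $m$ of the $2m$ partial products lie in $\langle u \rangle$ and $m$ lie in $\langle u \rangle v$. This forces the positions of the $m$ reflections among the $2m-1$ terms into a very short list of patterns. For each pattern I would write the partial products in the rotation coset as short integer linear combinations of the rotation exponents $x_i$ and reflection exponents $y_j$ in $\Z_m$, and show that the resulting system of distinctness constraints has no solution. For $m=3$ this is a short hand calculation, and for $m=4$ it is slightly longer; in the latter case I would likely invoke GAP as in Theorem~\ref{th:small_nonabhard}.

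For the positive part the plan is to exhibit an explicit sequencing of $D_{2m}$ for each $m \geq 5$. A natural template, suggested by Lemma~\ref{lem:r0} and by the analysis of Section~\ref{sec:polymeth}, is to interleave the $m-1$ non-identity rotations $u^{x_1}, \ldots, u^{x_{m-1}}$ with the $m$ reflections $u^{y_1}v, \ldots, u^{y_m}v$ in a prescribed pattern, choosing the $x_i$ and $y_j$ so that (a) the partial products in $\langle u \rangle$ hit each of $e, u, \ldots, u^{m-1}$ exactly once, and (b) the partial products in $\langle u \rangle v$ hit each of $v, uv, \ldots, u^{m-1}v$ exactly once. Since each reflection flips the sign of subsequent rotation contributions, (a) and (b) become two coupled directed-terrace conditions on $\Z_m$, and I would try to meet them by taking the $x_i$ and $y_j$ to be arithmetic progressions with residue-dependent offsets, verifying distinctness by a telescoping calculation. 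Cases would be split by the residue of $m$ modulo~$4$, and the very smallest values $m \in \{5,6,7,8,9\}$ handled directly if they are not absorbed into the general family.

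The principal obstacle lies in the positive part: no single closed-form interleaving appears to work uniformly in $m$, and each residue class requires its own carefully tuned progressions. This is precisely why the theorem was settled in two papers, with Isbell~\cite{Isbell90} treating one family of residues and Li~\cite{Li97} completing the remainder. Once the correct templates are in hand, each individual case reduces to a routine verification of distinctness.
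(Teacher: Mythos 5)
First, note that the paper does not prove this theorem at all: it is imported verbatim from Isbell~\cite{Isbell90} (odd~$m$) and Li~\cite{Li97} ($m$ even), so there is no internal proof to match your attempt against. Judged on its own terms, your write-up is a plan rather than a proof, and the gap is precisely where you locate it: the entire mathematical content of the positive direction is the explicit sequencings, and none is supplied. Asserting that suitable $x_i$ and $y_j$ can be taken to be ``arithmetic progressions with residue-dependent offsets, verifying distinctness by a telescoping calculation'' is the conclusion one hopes for, not an argument; whether such choices exist and close up the two coupled distinctness conditions is exactly what Isbell and Li had to establish, and it occupies both papers. The negative part is fine in principle (a finite check for $D_6$ and $D_8$, as Gordon originally did), though ``a very short list of patterns'' undersells it slightly: the coset pattern of the partial products is an arbitrary binary word of length $2m$ beginning with $0$ and containing $m$ zeros and $m$ ones, so there are $\binom{2m-1}{m}$ patterns before one even assigns exponents. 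For $m\in\{3,4\}$ this is still a routine computation.

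Second, the specific template you propose --- fully interleaving the $m-1$ rotations with the $m$ reflections --- is not the one that works in the literature, and Section~\ref{sec:largek} of this paper reproduces the real constructions in enough detail to see the difference. Isbell's sequencings place the reflections in two long \emph{consecutive} blocks ($v, uv, \ldots, u^{2\ell-1}v$ and then $u^{2\ell}v, u^{2\ell+1}v, \ldots$), separated and flanked by short runs of rotations, and the rotation segment is governed not by an arithmetic progression but by the difference sequence of a graceful permutation of a path (with a palindromic reuse of the same differences at the end); the constraints you call ``two coupled directed-terrace conditions on $\Z_m$'' are discharged by graceful and cracked graceful permutations, as in Lemmas~\ref{lem:g2s} and~\ref{lem:cg2s}. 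If you carried out your plan as stated you would most likely discover that a naive alternating pattern with tuned progressions does not satisfy both coset conditions simultaneously, and you would be pushed back toward the block structure and graceful-labelling machinery that the paper already exhibits. So: right decomposition, right historical attribution, but the construction --- which is the theorem --- is missing, and the proposed shape of the construction is not the one that succeeds.
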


Existing results also get us some of the way for the~$2m-2$ case, via the general result given in Lemma~\ref{lem:rseq}.  In order to state that we need a notion closely related to sequenceability.

Let ${\bf a} = (a_1, a_2, \ldots, a_{n-1})$ be a cyclic arrangement of the non-identity elements of~$G$ (i.e.~$a_{n-1}$ is considered to be adjacent to $a_1$) and define~${\bf b} = (b_1, b_2, \ldots, b_{n-1})$ by $b_i = b_i^{-1}b_{i+1}$ where the indices are considered modulo~$(n-1)$ (so $b_{n-1} = b_{n-1}^{-1}b_1$).     If the elements of~${\bf b}$ are distinct then ${\bf a}$ is a {\em directed  rotational terrace} for~$\Z_n$ and~${\bf b}$ is its associated {\em  rotational sequencing}.  Clearly, the directed  rotational terrace determines the  rotational sequencing; the reverse is also true.

(Note: there are several different but equivalent definitions in the literature; see, for example,~\cite{AAAP11,FGM,Keedwell83,OW15}.  We avoid the more common, but less descriptive, names ``R-sequencing" and ``directed R-terrace" for these concepts to bypass confusion with our $S$-sequencings and directed $S$-terraces.)

\begin{lem}\label{lem:rseq}
Let $G$ be a group of order~$n$ and let $S \subseteq G \setminus \{ e \}$ with $|S| = n-2$.
If $G$ has a rotational sequencing then it also has an $S$-sequencing.
\end{lem}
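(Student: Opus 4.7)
The plan is to exploit the fact that a rotational sequencing already supplies an arrangement of all $n-1$ non-identity elements of $G$ with strong product-distinctness properties, and then argue that removing exactly one element (namely, the element of $G\setminus\{e\}$ missing from $S$) by ``cutting'' the cyclic arrangement at the right spot produces an $S$-sequencing.

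First I would record the standing notation. Let $(a_1,\ldots,a_{n-1})$ be the directed rotational terrace with associated rotational sequencing $(b_1,\ldots,b_{n-1})$, where $b_i=a_i^{-1}a_{i+1}$ (indices mod $n-1$). Because the $a_i$ are distinct, no $b_i$ equals $e$, and because the rotational sequencing is an arrangement of distinct elements, the set $\{b_1,\ldots,b_{n-1}\}$ is exactly $G\setminus\{e\}$. Now let $g$ be the unique element of $(G\setminus\{e\})\setminus S$, and choose the index $j$ for which $b_j=g$.

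The candidate $S$-sequencing is then the rotational sequencing read cyclically starting immediately after position $j$ and stopping immediately before position $j$:
\[
{\bf g}=(b_{j+1},b_{j+2},\ldots,b_{n-1},b_1,b_2,\ldots,b_{j-1}),
\]
a sequence of length $n-2$ whose underlying set is $\{b_1,\ldots,b_{n-1}\}\setminus\{b_j\}=S$. The key computation is the telescoping identity
\[
b_{j+1}b_{j+2}\cdots b_{j+k}=a_{j+1}^{-1}a_{j+k+1}
\]
for $1\le k\le n-2$, which follows immediately from $b_i=a_i^{-1}a_{i+1}$ and cancellation. Consequently the partial product sequence of ${\bf g}$ is
\[
\bigl(a_{j+1}^{-1}a_{j+1},\,a_{j+1}^{-1}a_{j+2},\,\ldots,\,a_{j+1}^{-1}a_{j-1},\,a_{j+1}^{-1}a_{j}\bigr),
\]
with indices read cyclically mod $n-1$. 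These $n-1$ values are distinct because the $a_i$ are distinct (left multiplication by $a_{j+1}^{-1}$ is a bijection), so ${\bf g}$ is an $S$-sequencing.

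There is essentially no obstacle beyond setting up the correspondence carefully: the only thing to verify is that the $b_i$ exhaust $G\setminus\{e\}$ (so that a $j$ with $b_j=g$ exists) and that the telescoping is valid even as the cyclic index wraps around position $n-1$ back to $1$, which holds precisely because the rotational sequencing is defined cyclically. The argument is constructive and uniform in $G$, so it will apply verbatim in the dihedral setting when a rotational sequencing of $D_{2m}$ is available.
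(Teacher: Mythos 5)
Your proof is correct and takes essentially the same route as the paper: both cut the cyclic rotational sequencing at the position of the missing element and use the telescoping $b_{j+1}\cdots b_{j+k}=a_{j+1}^{-1}a_{j+k+1}$ to see that the partial products are left translates of the distinct terrace entries. The paper simply phrases the rotation as ``index ${\bf b}$ so that $b_{n-1}=x$'' and drops the last term, which is the same construction.
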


\begin{proof}
Let~$S = G \setminus \{ e, x \}$ for some~$x \in G$.   Let ${\bf b} = (b_1, b_2, \ldots, b_{n-1})$ be a rotational sequencing of~$G$ indexed so that~$b_{n-1} = x$.  The partial products of  $(b_1, b_2, \ldots, b_{n-2})$ are all distinct because otherwise there would be a repeat in the directed rotational terrace associated with~${\bf b}$.  Hence  $(b_1, b_2, \ldots, b_{n-2})$ is the required $S$-sequencing.
\end{proof}

Bode and Harborth use this method (but not this terminology) to prove Conjecture~\ref{conj:abhard} for the~$k = n-2$ case for cyclic groups of odd order.  More generally, the recent proof~\cite{AKP17} of a conjecture of Friedlander, Gordon and Miller~\cite{FGM} similarly implies that Conjecture~\ref{conj:abhard} holds for the~$k = n-2$ case for all abelian groups that do not have a single involution.

For dihedral groups, this approach covers the cases where the group has order a multiple of~4:
\begin{thm}
Let $m$ be even and $S \subseteq D_{2m} \setminus \{ e \}$ with~$|S| = 2m-2$.  Then~$D_{2m}$  has an $S$-sequencing.
\end{thm}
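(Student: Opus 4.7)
The plan is to invoke Lemma~\ref{lem:rseq}, reducing the theorem to showing that $D_{2m}$ has a rotational sequencing whenever $m$ is even. Two observations make this reduction complete. First, the quotients $b_i = a_i^{-1}a_{i+1}$ arising from any cyclic arrangement ${\bf a}$ of $D_{2m} \setminus \{e\}$ with distinct successive quotients are themselves non-identity (because the $a_i$'s are distinct), hence they exhaust $D_{2m} \setminus \{e\}$. Second, cyclically rotating ${\bf a}$ cyclically rotates ${\bf b}$, so one rotational sequencing can be realigned to place any prescribed $x$ at position $b_{2m-1}$, which is all Lemma~\ref{lem:rseq} requires. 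The case $m = 2$ is already covered by Theorem~\ref{th:small_nonabhard}, so we may assume $m \geq 4$.

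To construct a rotational sequencing of $D_{2m}$ for $m$ even, I would arrange the non-identity elements cyclically in alternating ``blocks'' of rotations (elements of $C_m$) and reflections (elements of $C_m v$). A short computation shows that a within-block quotient $a_i^{-1}a_{i+1}$ lies in $C_m$ while a between-block quotient lies in $C_m v$. For the multiset of quotients to match $D_{2m} \setminus \{e\}$ we need $m-1$ within-block quotients realizing the non-identity rotations and $m$ between-block quotients realizing all of $C_m v$. Since the number of blocks in a cyclic sequence equals the number of type-changes and the blocks alternate in type, there must be $m/2$ rotation-blocks and $m/2$ reflection-blocks; this is precisely the role played by the parity hypothesis, and shows why the same combinatorial skeleton cannot work for $m$ odd.

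The main obstacle is then to choose block sizes (summing to $m-1$ and $m$ respectively) and internal orderings so that all $2m-1$ resulting quotients are distinct. A small case illustrates feasibility: the arrangement $(u, v, u^2v, u^3v, u^2, u^3, uv)$ is a rotational sequencing of $D_8$ with block pattern $R(1), S(3), R(2), S(1)$. For general even $m \geq 4$, I expect a uniform construction built from a regular exponent pattern (e.g.\ mostly blocks of size $2$) combined with a careful matching of within- and between-block differences so that each of $C_m \setminus \{e\}$ and each of $C_m v$ is realized exactly once. The technical core of the argument will be verifying distinctness by an explicit exponent calculation that works uniformly in $m$; the flexibility afforded by having $m/2$ blocks of each type for large $m$ makes such a uniform choice plausible, but the bookkeeping is where the real work lies.
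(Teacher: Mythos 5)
Your reduction is exactly the one the paper uses: apply Lemma~\ref{lem:rseq}, noting that a rotational sequencing can be cyclically re-indexed so that any prescribed element sits in the last position. Your block-counting analysis is also sound as far as it goes --- within-block quotients land in $C_m$ (including for reflection blocks, since a product of two reflections is a rotation), between-block quotients land in $C_m v$, and forcing $m$ between-block quotients forces $m$ alternating blocks, hence $m$ even. This correctly explains why the construction cannot exist for odd $m$.

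The gap is that you never actually establish the existence half: that $D_{2m}$ \emph{does} have a rotational sequencing for every even $m$. Your $D_8$ example and the remark that a ``uniform construction built from a regular exponent pattern'' should exist, with ``the bookkeeping'' deferred, is precisely the nontrivial content of the theorem being left unproved. The paper does not construct these either; it cites Keedwell's result that $D_{2m}$ has a rotational sequencing if and only if $m$ is even~\cite{Keedwell83}, and that citation is doing all the heavy lifting in the two-line proof. So either you need to invoke that known theorem explicitly, or you need to carry out the explicit exponent construction you sketch --- and the latter is a genuine piece of work (it is essentially the content of Keedwell's paper), not routine bookkeeping. As written, the proof is incomplete at its central step.
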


\noindent
Proof. The dihedral group~$D_{2m}$ has a rotational sequencing if and only if~$m$ is even~\cite{Keedwell83}.  Apply Lemma~\ref{lem:rseq}.
\qed

We also take our cue from Bode and Harborth's methodology when~$m$ is odd.  Their proof that Conjecture~\ref{conj:abhard} holds when~$k = n-2$ for cyclic groups of even order implicitly uses the following result:

\begin{lem}\label{lem:seq}
If~$G$ has a sequencing with first element~$x$ then $G$ has an~$S$-sequencing for~$S = G \setminus \{ e,x \}$.
\end{lem}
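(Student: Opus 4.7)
The plan is to show that simply deleting the first element from the given sequencing yields the required $S$-sequencing, with no further modification needed. The key is the observation that the partial products of the truncated sequence are obtained from the partial products of the original sequencing by a single left translation, and left translation preserves distinctness.

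More concretely, let $(g_1, g_2, \ldots, g_{n-1})$ be the given sequencing with $g_1 = x$, and let $(h_0, h_1, \ldots, h_{n-1})$ be its partial product sequence. By definition of sequencing the $h_i$ are all distinct, so in particular $h_1, h_2, \ldots, h_{n-1}$ are distinct. Now consider the sequence $(g_2, g_3, \ldots, g_{n-1})$ of length $n-2$; its element set is exactly $S = G \setminus \{e, x\}$. I would then compute its partial product sequence $(h_0', h_1', \ldots, h_{n-2}')$ and note that $h_0' = e = x^{-1}h_1$ and, for $1 \leq i \leq n-2$, $h_i' = g_2 \cdots g_{i+1} = x^{-1}(xg_2 \cdots g_{i+1}) = x^{-1}h_{i+1}$.

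Hence $(h_0', h_1', \ldots, h_{n-2}')$ equals the left translate $x^{-1}(h_1, h_2, \ldots, h_{n-1})$, which has $n-1$ distinct entries; the distinctness of the $h_i'$ follows immediately. Thus $(g_2, \ldots, g_{n-1})$ is an $S$-sequencing, as desired. There is essentially no obstacle here: the entire argument is a one-line bijection argument once the correct object to examine (the left translate by $x^{-1}$) has been identified.
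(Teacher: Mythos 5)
Your proposal is correct and follows the same route as the paper: delete the first element $x$ and observe that the resulting partial products are distinct because any repeat would force a repeat in the original directed terrace. Your explicit identification of the truncated partial product sequence as the left translate by $x^{-1}$ of $(h_1,\ldots,h_{n-1})$ is just a more detailed way of phrasing the paper's one-line argument.
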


\begin{proof}
If the sequencing is~$(x,  b_2, \ldots, b_{n-1})$ then~$(b_2, \ldots, b_{n-1})$ must be an~$S$-sequencing else we would have a repeat somewhere in the directed terrace associated with the sequencing.
\end{proof}

Therefore, for odd~$m$, our task becomes to a construct sequencing for~$D_{2m}$ with first element~$x$,  for each possible choice of~$x$.  Noting that there is an automorphism that maps one element to another in~$D_{2m}$ ($m$ still odd) if and only if the two elements have the same order reduces the problem to finding sequencings that have first elements of all possible orders.   In order to follow this path, we introduce and generalise some of the constructions of Isbell~\cite{Isbell90} for sequencings of dihedral groups.

To begin, we say a bit more about how sequencings function and introduce graceful permutations.

Let~$G$ be a group of order~$n$ and let~${\bf g} = (g_1, g_2, \ldots g_{n-1})$ be a sequencing with basic directed terrace~${\bf h} = (h_0, h_1, h_2, \ldots, h_{n-1})$.   Then any sequence~${\bf h'} = (h'_0, h'_1, h'_2, \ldots, h'_{n-1})$ such that~${h'}_{i-1}^{-1}h'_i = g_i$ for each~$i$ is called a {\em directed terrace} for~$G$.  The basic directed terrace~${\bf h}$ is a directed terrace and a sequence is a directed terrace if and only if it is of the form $x{\bf h} = (xh_0, xh_1, xh_2, \ldots, xh_{n-1})$.  Not requiring that directed terraces be basic removes an unnecessary restriction when attempting to build a sequencing via a directed terrace.

Given a directed terrace ${\bf h} = (h_0, h_1, h_2, \ldots, h_{n-1})$ for~$G$ there are two simple ways to obtain further directed terraces~\cite{Bailey84}.  First, we may reverse~${\bf h}$ to give the directed terrace $(h_{n-1}, h_{n-2}, \ldots, h_0)$.   Second we may find the unique value of~$i$ such that $h_{n-1}^{-1}h_0 = g_i$ and produce the directed terrace $(h_{i+1}, h_{i+2}, \ldots, h_{n-1}, h_0, \ldots, h_{i})$.  Call this the {\em translation} of~${\bf h}$.  Thus from each directed terrace we can produce three new directed terraces: its reverse, its translation and the reverse of its translation.

Let~${\bf a} = (a_1, a_2, \ldots, a_n)$ be an arrangement of the integers~$\{ 0  ,1, \ldots, n-1 \}$.  If the {\em sequence of absolute differences} ${\bf b} = (b_1, b_2, \ldots, b_{n-1})$ defined by~$b_i = |a_{i+1} - a_i|$ consists of the integers~$\{ 1, 2, \ldots, n-1 \}$ then~${\bf a}$ is a {\em graceful permutation}.  A graceful permutation is equivalent to a graceful labelling of a path with~$n$ vertices; see~\cite{GallianSurvey} for more details about graceful labellings.

We shall need graceful permutations with various properties in our constructions and will investigate them further in the next section.
For now, the following example has the constructions we use to prove the conjecture when~$m$ is prime.

\begin{exa}\label{ex:gp1}
The sequence $(0,2\ell-1, 1, 2\ell-2, 2, 2\ell-3, \ldots, \ell-1, \ell)$ is a graceful permutation of length~$2\ell$ 
and $(0, 2\ell, 1, 2\ell-1, 2, 2\ell-2, \ldots, \ell, \ell+2, \ell+1)$ is a graceful permutation of length~$2\ell+1$.  These are known as the {\em Walecki Constructions}~\cite{Alspach08}.

When~$\ell$ is odd, the sequence 
\begin{flushleft}
$(\ell, 0, 2\ell, 1, 2\ell-1, \ldots, (\ell-1)/2, (3\ell+1)/2, $
\end{flushleft} 
\begin{flushright}
$(3\ell-1)/2, (\ell+1)/2, (3\ell-3)/2, (\ell+3)/2, \ldots, \ell+1, \ell-1)$
\end{flushright}
is a graceful permutation of length~$2\ell+1$ \cite{Isbell90}.  The similar construction
$$(\ell, 2\ell-1, 0, 2\ell-2, 1, \ldots, (3\ell-1)/2, (\ell-1)/2, (\ell+1)/2, (3\ell-3)/2, (\ell+3)/2,  \ldots, \ell+1, \ell-1 )$$
is easily checked to be a graceful permutation of length~$2\ell$. 
\end{exa}

Isbell~\cite{Isbell90} gives three constructions for sequencings of dihedral groups~$D_{2m}$ where~$m$ is odd.  In that paper the concern is to get one sequencing for each order.  We require more, so the following descriptions work with arbitrary sequences of integers that have the properties on which Isbell relied (graceful permutations in the first two, a slight generalisation thereof in the third) in place of the specific sequences used by Isbell.  Further,~\cite{Isbell90} only covers the~$m \equiv 3 \pmod{4}$ cases for the second and third constructions; in addition to these we give slight variations that include the $m \equiv 1 \pmod{4}$ cases (although we do not have the integer sequences required to make use of the third construction one). 

\vspace{2mm}
\noindent
{\bf Isbell's first construction.}
Let~$m = 4\ell+1$.  Let~$(a_1, a_2, \ldots, a_{2\ell})$ be a graceful permutation of length~$2\ell$ with differences~$(b_1, b_2, \ldots, b_{2\ell-1})$ (not absolute differences; here~$b_i = a_{i+1} - a_i$) and such that~$a_{2\ell}  = \ell$.  Consider the symbols modulo~$4\ell+1$ rather than as integers, then the sequence
\begin{flushleft}
$( u^{b_1}, u^{b_2}, \ldots, u^{b_{2\ell-1}}; u^{2\ell}; v, uv, u^2v, \ldots, u^{2\ell-1}v; u^{4\ell}v;$
\end{flushleft}
\begin{flushright} 
$    u^{2\ell}v, u^{2\ell+1}v, \ldots, u^{4\ell-1}v; u^{2\ell+1}; u^{-b_{2\ell-1}}, u^{-b_{2\ell-2}}, \ldots, u^{-b_1}  )$
\end{flushright}
is a sequencing for~$D_{8\ell+2}$ (where semi-colons are used to help indicate the pattern).  The associated directed terrace is
\begin{flushleft}
$( u^{a_1}, u^{a_2}, \ldots, u^{a_{2\ell}}; u^{3\ell} ;  u^{3\ell}v, u^{3\ell-1}, u^{3\ell+1}v, u^{3\ell-2}, \ldots, u^{4\ell-1}v, u^{2\ell};     $
\end{flushleft}
\begin{flushright}
$  u^{2\ell-1}v, u^{4\ell}, u^{2\ell}v, u^{4\ell-1}v, \ldots, u^{3\ell+1}, u^{3\ell-1}v;  
u^{2\ell-2-a_{2\ell}}v, u^{2\ell-2-a_{2\ell-1}}v, 
\ldots, u^{2\ell-2 - a_{1}}v   ).$
\end{flushright}

\vspace{2mm}
\noindent
{\bf Isbell's second construction.}
Let~$m=4\ell+3$ with~$\ell$ odd.  Let~$(a_1, a_2, \ldots, a_{2\ell+1})$ be a graceful permutation of length~$2\ell+1$ with differences~$(b_1, b_2, \ldots, b_{2\ell})$ (again, not absolute differences) and such that~$a_1 = \ell$ and~$a_{2\ell+1} = \ell-1$.  Consider the symbols modulo~$4\ell+3$ rather than as integers, then the sequence
\begin{flushleft}
$( u^{b_1}, u^{b_2}, \ldots, u^{b_{2\ell}}; u^{2\ell+2}; v, uv, u^2v, \ldots, u^{2\ell}v; u^{4\ell+2}v;$
\end{flushleft}
\begin{flushright} 
$    u^{2\ell+1}v, u^{2\ell+2}v, \ldots, u^{4\ell+1}v; u^{2\ell+1}; u^{-b_{1}}, u^{-b_{2}}, \ldots, u^{-b_{2\ell} }  )$
\end{flushright}
is a sequencing for~$D_{8\ell+6}$.  The associated directed terrace is
\begin{flushleft}
$( u^{a_1}, u^{a_2}, \ldots, u^{a_{2\ell+1}}; u^{3\ell+1} ;  u^{3\ell+1}v, u^{3\ell}, u^{3\ell+2}v, u^{3\ell-1}, \ldots, u^{2\ell+1}  ,u^{4\ell+1}v;     $
\end{flushleft}
\begin{flushright}
$  u^{4\ell+2}, u^{2\ell}v, u^{4\ell+1},  u^{2\ell+1}v,  \ldots, u^{3\ell+2}, u^{3\ell}v; u^{a_{1} - 1}v, u^{a_{2} - 1}v, 
\ldots, u^{a_{2\ell+1}-1}v   ).$
\end{flushright}
Although Isbell did not consider this case, essentially the same construction works for $m=4\ell+1$ with~$\ell$ odd.  Let~$(a_1, a_2, \ldots, a_{2\ell})$ be a graceful permutation of length~$2\ell$ with differences~$(b_1, b_2, \ldots, b_{2\ell-1})$ (not absolute differences) and such that~$a_1 = \ell$ and~$a_{2\ell} = \ell-1$.  Consider the symbols modulo~$4\ell+1$ rather than as integers, then the sequence
\begin{flushleft}
$( u^{b_1}, u^{b_2}, \ldots, u^{b_{2\ell-1}}; u^{2\ell+1}; v, uv, u^2v, \ldots, u^{2\ell-1}v; u^{4\ell}v;$
\end{flushleft}
\begin{flushright} 
$    u^{2\ell}v, u^{2\ell+1}v, \ldots, u^{4\ell-1}v; u^{2\ell}; u^{-b_{1}}, u^{-b_{2}}, \ldots, u^{-b_{2\ell-1} }  )$
\end{flushright}
is a sequencing for~$D_{8\ell+2}$.  The associated directed terrace is
\begin{flushleft}
$( u^{a_1}, u^{a_2}, \ldots, u^{a_{2\ell}}; u^{3\ell} ;  u^{3\ell}v, u^{3\ell-1}, u^{3\ell+1}v, u^{3\ell-2}, \ldots, u^{4\ell-1}v  ,u^{2\ell};     $
\end{flushleft}
\begin{flushright}
$  u^{2\ell-1}v, u^{4\ell}, u^{2\ell}v, u^{4\ell+1},   \ldots, u^{3\ell+1}, u^{3\ell-1}v; u^{a_{1} - 1}v, u^{a_{2} - 1}v, 
\ldots, u^{a_{2\ell}-1}v   ).$
\end{flushright}
The only reason that this construction does not work for even~$\ell$ is that the required graceful permuation cannot exist~\cite{Gvozdjak04}; we shall see more about this in the next section.

\vspace{2mm}

For the third construction we need a new concept, closely related to those of $\hat{\rho}$-labellings (a.k.a. nearly graceful labellings) and holey $\alpha$-labellings described in~\cite[Section~3.3]{GallianSurvey}.     Let~${\bf a} = (a_1, a_2, \ldots, a_n)$ be an arrangement of~$n$ of the integers~$\{ 0  ,1, \ldots, n \}$.  If the {\em sequence of absolute differences} ${\bf b} = (b_1, b_2, \ldots, b_{n-1})$ defined by~$b_i = |a_{i+1} - a_i|$ consists of the integers~$\{ 1, 2, \ldots, n-1 \}$ then we call~${\bf a}$ a {\em cracked graceful permutation}.  The missing element of~$\{ 0  ,1, \ldots, n \}$ is called the {\em crack}.

\begin{exa}\label{ex:hgp}
For even values of~$\ell$ we give the cracked permutation of length~$2\ell - 1$ with crack~$\ell - 2$ given by Isbell~\cite{Isbell90}.  
For~$\ell = 2,4,8$ the sequences $(3,1,2)$,  $(5,0,6,7,3,1,4)$ and
$$(9,5,7,12,11,2,10,4,14,3,0,13,1,15,8)$$
respectively have the required properties.

For other~$\ell$ the first part of the construction varies as~$\ell$ varies modulo~$6$, always having the form of an ad hoc sequence of elements followed by ``zigzag" sequences of length~$6$ in a regular pattern.  

For~$\ell \equiv 0 \pmod{6}$ it starts 
$$ \ell+1, \ell-3, \ell-1; \ell+4, \ell-6, \ell+4, \ell-5, \ell+2, \ell-4; \ell+7, \ell-9, \ell+6, \ell-8, \ell+5, \ell-7; \ldots$$
For~$\ell \equiv 2 \pmod{6}$ with~$\ell \geq 14$ it starts
\begin{flushleft}
$\ell+1, \ell-6, \ell+3, \ell-1, \ell-3, \ell+2, \ell-4, \ell+4, \ell-7, \ell+5, \ell -5; $
\end{flushleft}
\begin{flushright}
$\ell+8, \ell-10, \ell+7, \ell-9, \ell+6, \ell-8; \ell+11, \ell-13, \ell+10, \ell-12, \ell+9, \ell-11; \ldots$
\end{flushright}
For~$\ell \equiv 4 \pmod{6}$ with~$\ell \geq 10$ it starts
\begin{flushleft}
$ \ell+1, \ell-1, \ell-5, \ell+3, \ell-4, \ell+2, \ell-3;$
\end{flushleft}
\begin{flushright}
$ \ell+6, \ell-8, \ell+5, \ell-7, \ell+4, \ell-6; \ell+9, \ell-11, \ell+8, \ell-10, \ell+7, \ell-9; \ldots$
\end{flushright}
In each case the last zigzag sequence of length~$6$ is
$$(3\ell-4)/2, \ell/2, (3\ell-6)/2, (\ell+2)/2, (3\ell-8)/2, (\ell+4)/2$$
and the cracked graceful permutation concludes with~$(l-2)/2$ followed by a long zigzag and two final ad hoc elements:
$$(3\ell-2)/2, (\ell-4)/2, 3\ell/2, (\ell-6)/2, \ldots, 0, 2\ell - 2; 2\ell-1, \ell.$$
\end{exa}

\noindent
{\bf Isbell's third construction.}  Let~$m=4\ell+1$ with~$\ell$ even. Let~$(a_1, a_2, \ldots, a_{2\ell-2})$ be a cracked graceful permutation of length~$2\ell-2$ with crack~$\ell-4$  and differences~$(b_1, b_2, \ldots, b_{2\ell-3})$ (not absolute differences) and such that~$a_1 = \ell-1$ and~$a_{2\ell-2} = \ell-2$.  Consider the symbols modulo~$4\ell+1$ rather than as integers, then the sequence
\begin{flushleft}
$( u^{b_1}, u^{b_2}, \ldots, u^{b_{2\ell-3}}; u^{2\ell+1}, u^{2\ell-2},u^{2\ell+2} ; v, uv, u^2v, \ldots, u^{2\ell-2}v;u^{4\ell-2}v, u^{4\ell-1}, u^{4\ell}v;$
\end{flushleft}
\begin{flushright} 
$    u^{2\ell-1}v, u^{2\ell}v, \ldots, u^{4\ell-3}v; u^{2\ell-1}; u^{-b_{1}}, u^{-b_{2}}, \ldots, u^{-b_{2\ell-3} }; u^{2\ell}, u^{2\ell+3}  )$
\end{flushright}
is a sequencing for~$D_{8\ell+2}$.  The associated directed terrace is
\begin{flushleft}
$( u^{a_1}, u^{a_2}, \ldots, u^{a_{2l-2}}; u^{3l-1}, u^{l-4}, u^{3l-2} ;  u^{3l-2}v, u^{3l-3}, u^{3l-1}v, u^{3l-4}, \ldots, u^{2l-1}, u^{4l-3}v;     $
\end{flushleft}
\begin{center}
$  u^{4l},  u^{4l-2}v, u^{4l-1};  u^{2l-3}v, u^{4l-2}, u^{2l-2}v,  u^{4l-3},  \ldots, u^{3l}, u^{3l-4}v;$ 
\end{center}
\begin{flushright}
$u^{a_{1} - 2}v, u^{a_{2} - 2}v, \ldots, u^{a_{2l-2}-2}v; u^{3l-3}v, u^{l-6}v   ).$
\end{flushright}
Let~$m=4\ell+3$ with~$\ell$ even. Let~$(a_1, a_2, \ldots, a_{2\ell-1})$ be a cracked graceful permutation of length~$2\ell-1$ with crack~$\ell-2$  and differences~$(b_1, b_2, \ldots, b_{2\ell-2})$ (not absolute differences) and such that~$a_1 = \ell+1$ and~$a_{2\ell-1} = \ell$.  Consider the symbols modulo~$4\ell+3$ rather than as integers, then the sequence
\begin{flushleft}
$( u^{b_1}, u^{b_2}, \ldots, u^{b_{2\ell-2}}; u^{2\ell+2}, u^{2\ell-1},u^{2\ell+3} ; v, uv, u^2v, \ldots, u^{2\ell-1}v;u^{4\ell}v, u^{4\ell+1}, u^{4\ell+2}v;$
\end{flushleft}
\begin{flushright} 
$    u^{2\ell}v, u^{2\ell+1}v, \ldots, u^{4\ell-1}v; u^{2\ell}; u^{-b_{1}}, u^{-b_{2}}, \ldots, u^{-b_{2\ell-2} }; u^{2\ell+1}, u^{2\ell+4}  )$
\end{flushright}
is a sequencing for~$D_{8\ell+6}$.  The associated directed terrace is
\begin{flushleft}
$( u^{a_1}, u^{a_2}, \ldots, u^{a_{2\ell-1}}; u^{3\ell+2}, u^{\ell-2}, u^{3\ell+1} ;  u^{3\ell+1}v, u^{3\ell}, u^{3\ell+2}v, u^{3\ell-1}, \ldots, u^{4\ell}v  ,u^{2\ell+1};     $
\end{flushleft}
\begin{center}
$  u^{2\ell-2}v,  u^{2\ell}, u^{2\ell-1}v;  u^{4\ell+2}, u^{2\ell}v, u^{4\ell+1},  u^{2\ell+1}v,  \ldots, u^{3\ell+3}, u^{3\ell-1}v;$ 
\end{center}
\begin{flushright}
$u^{a_{1} - 2}v, u^{a_{2} - 2}v, 
\ldots, u^{a_{2\ell-1}-2}v; u^{3\ell}v, u^{\ell-4}v   ).$
\end{flushright}
Similarly to the second construction, the parity restrictions on~$\ell$ are because otherwise the required cracked graceful permutations do not exist.  We prove this in the next section.

\begin{thm}\label{th:prime_m}
Let~$m$ be an odd prime.  If $S \subseteq D_{2m} \setminus \{ e \}$  with $|S|=2m-2$, then $D_{2m}$ has an $S$-sequencing.
\end{thm}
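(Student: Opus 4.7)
The plan is to invoke Lemma~\ref{lem:seq}: a set $S\subseteq D_{2m}\setminus\{e\}$ of size $2m-2$ has the form $D_{2m}\setminus\{e,x\}$ for some non-identity $x$, so it suffices to exhibit, for each such $x$, a sequencing of $D_{2m}$ whose first element is $x$. The non-identity elements of $D_{2m}$ have order $m$ or $2$, and (as noted earlier in the paper) for odd $m$ the automorphism group of $D_{2m}$ acts transitively on elements of each order. Since the automorphic image of a sequencing is again a sequencing, I need only produce one sequencing of $D_{2m}$ starting with an element of $\langle u\rangle\setminus\{e\}$ and one starting with an element of $\langle u\rangle v$.

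For the order-$m$ case, Isbell's three constructions presented above do the job directly: for every odd prime $m\ge 5$ at least one of them applies (depending on the value of $m \bmod 8$), and in each case the resulting sequencing begins with $u^{b_1}$, which has order $m$. For the involution case, I plan to apply the translation---and, when necessary, composed with reversal---to the relevant Isbell sequencing. Since the last element of an Isbell terrace has the form $u^k v$, the value $h_{n-1}^{-1}h_0$ is an easily computed involution; locating it as some $g_i$ in the explicit sequencing then identifies the first element of the translated sequencing, which I would check is an involution. Direct inspection confirms this for Isbell's first construction via the translation itself, and for Isbell's second construction via the translation of the reverse; the third construction should succumb to an analogous verification.

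The main obstacle is executing this case analysis cleanly for all three constructions, most notably the third, whose terrace involves the cracked graceful permutations of Example~\ref{ex:hgp} and so has a less transparent structure; one also must track the positions of involutions carefully through the long sequencing formulas. The case $m=3$ is not handled by this approach because $D_6$ is not sequenceable and so Lemma~\ref{lem:seq} is unavailable; in fact the theorem's conclusion does fail for $D_6$ when $x$ is an involution (for instance $S = D_6 \setminus \{e,v\}$ admits no $S$-sequencing), so the result is effectively restricted to $m\ge 5$.
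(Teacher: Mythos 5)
Your proposal follows essentially the same route as the paper: reduce via Lemma~\ref{lem:seq} and the transitivity of automorphisms on the order classes (so that, $m$ being prime, it suffices to find one sequencing starting in $C_m\setminus\{e\}$ and one starting in $C_mv$), obtain the former directly from Isbell's constructions selected according to $m \bmod 8$, and the latter from the translation of the associated directed terrace, composed with reversal when necessary. Your side remark that the argument silently excludes $m=3$, where $D_6$ is not sequenceable and the conclusion genuinely fails, is correct and is a point the paper's own proof glosses over.
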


\begin{proof}
As~$m$ is prime each pair of elements of~$C_m \setminus \{ e \}$ are equivalent by automorphisms, as are each pair of elements of~$C_m v$.   By Lemma~\ref{lem:seq} it is therefore sufficient to find a sequencing with first element in~$C_m \setminus \{ e \}$ and a sequencing with first element in~$C_m v$. 

 Let~${\bf g} = (g_1, g_2 , \ldots g_{2m-1} )$  be the sequencing for~$D_{2m}$ constructed from Isbell's first construction when $m \equiv 1 \pmod{4}$, Isbell's second construction when $m \equiv 7 \pmod{8}$ and Isbell's third construction when $m \equiv 3 \pmod{8}$, in each case using the (cracked) graceful permuations given in the examples. Let~${\bf h} = (h_1, h_2 , \ldots h_{2m} )$ be the associated  directed terrace.

In all cases $g_1 = u^{b_1}$, where~$b_1$ is the first element of the differences of the (cracked) graceful permutation.  We have~$g_1 \in C_m \setminus \{ e \}$.

Now consider the translation of~${\bf h}$.  
When $m=4\ell+1$, the first element of the associated sequencing of the reverse of the translation is~$u^{2\ell-3}v$.
When~$m = 4\ell+3$ for even~$\ell$, the first element of its associated sequencing is $u^{4\ell-1}v$.    When~$m=4\ell+3$ for odd~$\ell$, the first element of the associated sequencing of the reverse of the translation is~$u^{4\ell}v$.  Each is in~$C_m v$. 
\end{proof}

To move to composite values of~$m$ we need to vary the first element in the sequencings we construct.  We do this by varying the (cracked) graceful permutations used in Isbell's constructions.  Note that as all elements of~$C_m v$ are equivalent under automorphisms when~$m$ is odd, the proof of Theorem~\ref{th:prime_m} gives sequencings that start with any such element for arbitrary odd~$m$.  We can therefore focus on sequencings starting with elements of~$C_m \setminus \{ e\}$.

\begin{lem}\label{lem:g2s}
Let~$(a_1, a_2, \ldots, a_{2\ell})$ be a graceful permutation of length~$2\ell$ with sequence of absolute differences~$(b_1, b_2, \ldots, b_{2\ell-1})$. Let~$(c_1, c_2, \ldots, c_{2\ell+1})$ be a graceful permutation of length~$2\ell+1$ with sequence of absolute differences~$(d_1, d_2, \ldots, d_{2\ell})$.  Then
\begin{enumerate}
\item if $a_{2\ell} = \ell$, then $D_{8\ell+2}$ has a sequencing with first element~$u^{b_1}$.
\item if $a_1= \ell$ and $a_{2\ell} = \ell-1$, then~$D_{8\ell+2}$ has a sequencing with first element~$u^{b_1}$ and a sequencing with first element~$u^{b_{2\ell-1}}$.
\item if $c_1 = \ell$ and $c_{2\ell+1} = \ell-1$, then~$D_{8\ell+6}$ has a sequencing with first element~$u^{d_1}$ and a sequencing with first element~$u^{d_{2\ell}}$.
\end{enumerate}
\end{lem}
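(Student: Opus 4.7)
The plan is to obtain each sequencing by direct application of one of Isbell's constructions, using the reversal of a directed terrace to produce the second sequencing in parts~(2) and~(3).

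For part~(1), the hypothesis $a_{2\ell} = \ell$ is exactly what is required to substitute $(a_1,\ldots,a_{2\ell})$ into Isbell's first construction. The leading entry of the resulting sequencing, as displayed in the construction, is $u^{b_1}$, which gives the claim.

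For parts~(2) and~(3), the hypotheses match the preconditions of the two variants of Isbell's second construction for $D_{8\ell+2}$ and $D_{8\ell+6}$ respectively, yielding sequencings with first elements $u^{b_1}$ and $u^{d_1}$. To produce the second sequencing in each part, I will use the fact, recorded in the preamble, that reversing a directed terrace $(h_0,h_1,\ldots,h_{n-1})$ (and, if desired, left-translating the reverse by $h_{n-1}^{-1}$ to obtain a basic directed terrace) yields a directed terrace whose associated sequencing is $(g_{n-1}^{-1}, g_{n-2}^{-1}, \ldots, g_1^{-1})$. In particular its first element is $g_{n-1}^{-1}$. Reading off the final entry of the respective instances of Isbell's second construction gives $g_{n-1} = u^{-b_{2\ell-1}}$ for $m=4\ell+1$ and $g_{n-1} = u^{-d_{2\ell}}$ for $m=4\ell+3$, whose inverses $u^{b_{2\ell-1}}$ and $u^{d_{2\ell}}$ are the required first elements.

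The work is essentially bookkeeping: Isbell's constructions are already presented in the preceding material as accepting any (cracked) graceful permutation satisfying the stated end-point conditions, so no further verification of sequencing-validity is required. The one ingredient beyond pure bookkeeping is the reversal trick for directed terraces, and the only place one has to be careful is to identify the final entries in Isbell's second construction correctly. It is worth remarking that applying this same reversal to part~(1) yields $g_{n-1}^{-1} = u^{b_1}$, the same first element as $g_1$, which explains why only one sequencing is promised in that case and why a separate end-point hypothesis is needed in parts~(2) and~(3) to deliver a distinct second first element.
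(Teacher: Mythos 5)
Your proposal follows essentially the same route as the paper: part (1) is a direct application of Isbell's first construction, and parts (2) and (3) come from the two variants of Isbell's second construction together with the reverse of the associated directed terrace. Your observation that the reversed terrace's sequencing begins with $g_{n-1}^{-1}$, and your identification of the final entries of the constructions as $u^{-b_{2\ell-1}}$ (length $2\ell$ case) and the analogous last difference in the length $2\ell+1$ case, are exactly what the paper relies on, as is your closing remark explaining why reversal buys nothing new in part (1).

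There is one point you elide which the paper treats explicitly: the lemma is stated in terms of \emph{absolute} differences $b_i = |a_{i+1}-a_i|$, whereas the exponents appearing in Isbell's constructions are the \emph{signed} differences $a_{i+1}-a_i$. If $a_2 < a_1$ the construction's leading entry is $u^{-b_1}$, not $u^{b_1}$, and you cannot repair this by reversing the graceful permutation without destroying the endpoint hypotheses that the constructions require. So your assertion that the leading entry ``as displayed in the construction is $u^{b_1}$'' is not literally correct. The paper closes this gap in one sentence: $u^{b}$ and $u^{-b}$ have the same order and hence are equivalent under an automorphism of $D_{2m}$ for odd $m$ (conjugation by $v$ already does it), and applying that automorphism to a sequencing yields a sequencing, so a sequencing with first element $u^{-b_1}$ gives one with first element $u^{b_1}$. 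You should add this observation; with it, your argument is complete and coincides with the paper's.
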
  

\begin{proof}
Part~1 follows from Isbell's first construction.  Parts~2 add~3 follow from Isbell's second, the first of each clause directly and the second after taking the reverse.  The distinction between absolute differences (here) and differences (in the Isbell constructions) is rendered moot as they give elements of~$D_{2m}$ that have the same order and hence are equivalent under automorphisms (as~$m$ is odd).
\end{proof}

\begin{lem}\label{lem:cg2s}
Let~$(a_1, a_2, \ldots, a_{2\ell-2})$ be a cracked graceful permutation of length~$2\ell-2$ with sequence of absolute differences~$(b_1, b_2, \ldots, b_{2\ell-2})$ and crack~$\ell-4$. Let~$(c_1, c_2, \ldots, c_{2\ell-1})$ be a cracked graceful permutation of length~$2\ell-1$ with sequence of absolute differences $(d_1, d_2, \ldots, d_{2\ell-2})$ with crack~$\ell-2$.  Then
\begin{enumerate}
\item if $a_1 = \ell-1$ and $a_{2\ell-2} = \ell-2$, then $D_{8\ell+2}$ has a sequencing with first element~$u^{b_1}$ and a sequencing with first element $u^{2\ell+3}$. 
\item if $c_1 = \ell+1$ and $c_{2\ell-1} = \ell$, then $D_{8\ell+6}$ has a sequencing with first element~$u^{d_1}$ and a sequencing with first element $u^{2\ell+4}$. 
\end{enumerate}
\end{lem}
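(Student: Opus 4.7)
The plan is to prove Lemma~\ref{lem:cg2s} in direct analogy with Lemma~\ref{lem:g2s}, with Isbell's third construction now playing the role that Isbell's first and second constructions did there. In each part, the first claimed sequencing comes straight from the construction applied to the given cracked graceful permutation, and the second is obtained by reversing the associated directed terrace and then applying an automorphism of~$D_{2m}$.

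For Part~1 ($m=4\ell+1$), I would apply Isbell's third construction to the cracked graceful permutation with $a_1=\ell-1$, $a_{2\ell-2}=\ell-2$, crack~$\ell-4$. The opening entries of the resulting sequencing are $u^{b_1},u^{b_2},\ldots,u^{b_{2\ell-3}}$, so the first element is $u^{b_1}$, exactly as required; as in the proof of Lemma~\ref{lem:g2s}, the distinction between signed and absolute first differences is absorbed into an automorphism of $D_{8\ell+2}$, since $u^{b_1}$ and $u^{-b_1}$ have the same order in~$C_{4\ell+1}$ ($m$ being odd). For the second sequencing, a glance at the explicit form of Isbell's third construction shows that the last element of its sequencing is $u^{2\ell+3}$. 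Reversing the associated directed terrace thus produces a sequencing whose first element is the inverse of that last element, namely $u^{-(2\ell+3)} \equiv u^{2\ell-2} \pmod{4\ell+1}$. Since $2\ell-2$ and $2\ell+3$ are negatives of one another modulo $4\ell+1$, the elements $u^{2\ell-2}$ and $u^{2\ell+3}$ have the same order in $C_{4\ell+1}$, and the automorphism of $D_{8\ell+2}$ sending $u\mapsto u^{-1}$, $v\mapsto v$ carries one to the other; applying it converts the reversed sequencing into one with first element $u^{2\ell+3}$.

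Part~2 ($m=4\ell+3$) proceeds identically with the corresponding version of Isbell's third construction. The direct construction yields a sequencing with first element $u^{d_1}$. Its last element is $u^{2\ell+4}$, and reversing the directed terrace gives a sequencing starting with $u^{-(2\ell+4)}\equiv u^{2\ell-1}\pmod{4\ell+3}$; the same inversion automorphism $u\mapsto u^{-1}$, $v\mapsto v$ of $D_{8\ell+6}$ then produces a sequencing with first element $u^{2\ell+4}$.

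The main obstacle is bookkeeping rather than substance: verifying by direct inspection of the explicit formulas displayed in the excerpt that the final entry of the sequencing in each case of Isbell's third construction is indeed $u^{2\ell+3}$ respectively $u^{2\ell+4}$, so that reversal really does deliver the required starting element up to the inversion automorphism. Once this is checked, the remainder of the proof is a direct transcription of the reversal-plus-automorphism argument already used in the proof of Lemma~\ref{lem:g2s}.
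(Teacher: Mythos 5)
Your proposal is correct and matches the paper's proof, which simply states that both parts ``follow from Isbell's third construction and its reverse''; you have just spelled out the reversal-plus-inversion-automorphism bookkeeping that the paper leaves implicit.
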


\begin{proof}
These follow from Isbell's third construction and its reverse.
\end{proof}

In the next section we investigate the existence of (cracked) graceful permutations for use with these lemmas.

\section{Graceful Permutations}\label{sec:gr}

Our task in this section is to construct (cracked) graceful permuations that meet the criteria for use in one of Isbell's constructions and have endpoints and first/last differences that give a variety of orders for the first element of the dihedral group sequencing. 

Given a graceful permutation~${\bf a} = (a_1, a_2, \ldots, a_n)$ on the symbols~$\{0, \ldots, n-1 \}$ define the {\em complement} to be~${\bf \bar{a}} = (n-1-a_1, n-1-a_2, \ldots, n-1-a_n)$.  This is also a graceful permutation. 

The most flexible result from the previous section is the first clause of Lemma~\ref{lem:g2s}, which requires a graceful permutation with specified first difference and last element and gives results for~$D_{8\ell+2}$.  We work with this first.

It is known that for a graceful permutation of length~$n$, a first difference of~$d$, for $1 \leq d \leq n-1$, is possible except when~$d=2$ and $n \in \{ 4,5,8 \}$~\cite{HOS19}.  Useful tools in the proof of that result are ``twizzler terraces" and these can also help us here. To make the most of these constructions, we need to be able to control the elements at each end of a graceful permutation.  
Necessary conditions are known:

\begin{thm}\label{th:gvoz}{\rm \cite{Gvozdjak04}}
Let~$0 \leq x,y \leq n-1$ and~$x \neq y$.  If there is a graceful permutation of length~$n$ with first element~$x$ and last element~$y$ then
\begin{itemize}
\item  $|x-y|$ has the same parity as $\lfloor n/2 \rfloor$
\item $|x-y| \leq n/2$
\item $(n-1)/2 \leq x+y \leq (3n-3)/2$
\end{itemize}
\end{thm}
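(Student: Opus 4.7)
The plan is to verify the three necessary conditions in turn: the parity directly, and the two inequalities through a joint induction on $n$.

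For the parity condition, I would write
\[
y - x = \sum_{i=1}^{n-1}(a_{i+1} - a_i) = \sum_{k=1}^{n-1}\epsilon_k\, k,
\]
where $\epsilon_k\in\{\pm 1\}$ records whether the unique step of absolute size $k$ moves up or down. Flipping any single $\epsilon_k$ alters the right-hand side by $2k$, so $y-x$ has the same parity as $\sum_{k=1}^{n-1} k = n(n-1)/2$. Running through the four residues of $n$ modulo $4$ confirms that $n(n-1)/2 \equiv \lfloor n/2\rfloor \pmod 2$, giving the first bullet.

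For the two inequalities, the induction is driven by two structural observations. First, the unique edge of size $n-1$ must connect the pair $\{0,n-1\}$, so these values occupy adjacent positions of the permutation. Second, the pointwise complement $a_i \mapsto n-1-a_i$ is itself a graceful permutation, with endpoints summing to $2(n-1)-(x+y)$; this renders the two bounds on $x+y$ equivalent and reduces the task to proving $x+y \geq (n-1)/2$. The cleanest inductive step is when one of the endpoints, say $x$, lies in $\{0,n-1\}$: if $x=0$ then the first observation forces $a_2 = n-1$, and subtracting $1$ from each of $a_2,\ldots,a_n$ produces a graceful permutation of length $n-1$ on $\{0,\ldots,n-2\}$ with endpoints $n-2$ and $y-1$. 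Feeding the inductive bounds $|x'-y'| \leq (n-1)/2$ and $x'+y' \leq 3(n-2)/2$ on this shorter permutation into $|x'-y'| = n-1-y$ and $x'+y' = n+y-3$ pins $y$ down to $[(n-1)/2,\,n/2]$; both bullets for the original permutation follow immediately. The analogous cases $x=n-1$, $y=0$, and $y=n-1$ are handled by combining this with reversal and complementation.

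The main obstacle is the remaining case in which \emph{both} $0$ and $n-1$ are interior vertices of the walk. Deleting the edge of size $n-1$ now splits the permutation into two subpaths whose value sets are not, in general, intervals in $\{0,\ldots,n-1\}$, so the inductive hypothesis does not apply directly to either piece. The plan here is to use the edges of the next few largest sizes---size $n-2$ must use one of the pairs $\{0,n-2\}$ or $\{1,n-1\}$, size $n-3$ one of three pairs, and so on---to pin down the local structure immediately around $0$ and $n-1$, and then either to splice the two subpaths across a short-size edge into a shorter graceful permutation on a shifted alphabet (where induction applies), or else to finish via the identity
\[
2\sum_{k=1}^{n-1} u_k = \tfrac{n(n-1)}{2} - (x+y),
\]
derived from $\sum_{i}(a_i+a_{i+1}) = n(n-1)-(x+y)$, where $u_k$ denotes the lower endpoint of the edge of size $k$, combined with the range restriction $u_k \in \{0,\ldots,n-1-k\}$ and the degree bound $\deg(v)\leq 2$ at every vertex $v$.
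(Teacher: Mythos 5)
First, a framing point: the paper does not actually prove this theorem --- it is quoted from Gv\"ozdjak's thesis --- and the only piece of it the paper ever argues is the parity condition, reproduced in Section~5 for cracked graceful permutations via the telescoping congruence $|x-y|\equiv\sum_i|a_{i+1}-a_i|=n(n-1)/2\equiv\lfloor n/2\rfloor\pmod 2$. Your parity argument is correct and is essentially that same argument. Your boundary cases for the inequalities are also sound: if $a_1=0$ then indeed $a_2=n-1$ is forced (the difference $n-1$ can only be realised by the pair $\{0,n-1\}$, and $a_1$ has a single neighbour), the shifted sequence $(a_2-1,\dots,a_n-1)$ is graceful of length $n-1$, and the inductive bounds do pin $y$ into $[(n-1)/2,\,n/2]$; complementation and reversal handle the symmetric cases.

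The genuine gap is the case in which both $0$ and $n-1$ are interior, which is the generic case and which you leave as an unexecuted ``plan.'' The splicing idea fails for exactly the reason you identify (deleting the edge $\{0,n-1\}$ leaves subpaths whose value sets need not be intervals, so the inductive hypothesis does not apply), and ``pinning down the local structure using the next few largest edge sizes'' is not an argument. Ironically, your closing identity $2\sum_k u_k=n(n-1)/2-(x+y)$ together with the degree bound (each value is the lower endpoint of at most $\deg(v)\le 2$ edges, so $\sum_k u_k\ge 2\bigl(0+1+\cdots\bigr)$ over the $\lceil(n-1)/2\rceil$ smallest values) already proves the \emph{entire} third bullet with no induction and no cases --- you present it only as a fallback and do not notice this. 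What nothing in your proposal establishes is the second bullet, $|x-y|\le n/2$, in the interior case. The standard way to get it is a different count: the edge of difference $k$ crosses $k$ of the ``gaps'' between consecutive values $j$ and $j+1$, so the gaps receive $n(n-1)/2$ crossings in total, while gap $j$ can receive at most $2\min(j+1,\,n-1-j)$ crossings and receives at least one fewer whenever exactly one endpoint of the path lies in $\{0,\dots,j\}$, i.e.\ for every $j$ with $\min(x,y)\le j<\max(x,y)$; summing the available slack gives $|x-y|\le\lfloor n/2\rfloor$. As it stands, your proof is complete for bullet one, completable for bullet three, and genuinely missing for bullet two.
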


Gv\"ozdjak conjectures that these conditions are also sufficient \cite{Gvozdjak04}.  Call this {\em Gv\"ozdjak's Conjecture}.   It is also known that for any~$x < n$ there is a graceful sequence of length~$n$ that starts with~$x$, see any of~\cite{Cattell07, FFG83, Gvozdjak04}.

Suppose we write~$n = pq+r$, where $p,q \geq 1$ and~$r \geq p/2$.  It is possible to construct an {\em imperfect $p$-twizzler terrace}~${\bf g}$, which is a graceful permutation and here we shall call a {\em $p$-twizzler permuation}, as follows.  (Proof of the construction's correctness can be found in~\cite{OW11}.)  

Start with the Walecki Construction of length~$n$.  Divide the first~$pq$ elements into $q$ subsequences of length~$p$.  Reverse each subsequence while keeping the order of the subsequences intact.  Rearrange the final~$r$ elements into a translate of a graceful permutation of length~$r$ such that the absolute difference between the $pq$th and $(pq+1)$th elements of the sequence is~$r$ (the condition that~$r \geq p/2$ and the fact that there is a graceful permutation of length~$r$ that starts with any element are the crucial pieces that guarantee this is possible). 

\begin{exa}\label{ex:twizzler}
Consider $n=2\ell=22$ with $p=4$, $q=3$ and $r=10$.  The following is a $p$-twizzler permutation with these parameters where the last 10 elements are a translate of $(8,2,6,1,9,0,7,4,3,5)$:
$$(20,1,21,0;18,3,19,2;16,5,17,4;14,8,12,7,15,6,13,10,9,11)$$
(semicolons separate the subsequences).
\end{exa}

\begin{thm}\label{th:gvoz2constr}
Assume Gv\"ozdjak's Conjecture holds.  Then there is a $S$-sequencing for any~$S \subseteq D_{8\ell+2} \setminus \{e\}$ with~$|S| = 8\ell$.
\end{thm}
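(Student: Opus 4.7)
The plan is to combine Lemma~\ref{lem:seq} and Lemma~\ref{lem:g2s} with Gv\"ozdjak's Conjecture.  By Lemma~\ref{lem:seq}, producing an $S$-sequencing for every $S \subseteq D_{8\ell+2}\setminus\{e\}$ of size $8\ell = |D_{8\ell+2}|-2$ is equivalent to producing, for every non-identity $x \in D_{8\ell+2}$, a sequencing of $D_{8\ell+2}$ whose first element is $x$.  Since automorphisms of $D_{8\ell+2}$ carry sequencings to sequencings and (because $m = 4\ell+1$ is odd) act transitively on $C_m v$ and with orbits on $C_m\setminus\{e\}$ indexed by $\gcd(\cdot,m)$, it suffices to cover one representative of each orbit.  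The coset $C_m v$ is handled by the argument at the end of the proof of Theorem~\ref{th:prime_m}, which uses only that $m$ is odd; the orbits in $C_m\setminus\{e\}$ are indexed by the proper divisors $d$ of $m$, with $u^d$ a convenient representative.

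By Lemma~\ref{lem:g2s}.1, a sequencing whose first element is $u^d$ follows from any graceful permutation of length $2\ell$ with last element $\ell$ and first absolute difference $d$.  Excluding $\ell = 1$ (where $m=5$ is prime and Theorem~\ref{th:prime_m} applies), every proper divisor $d$ of $m=4\ell+1$ satisfies $d \leq (4\ell+1)/3 \leq 2\ell-1$, so $d$ lies in the admissible range of first differences.  The proof therefore reduces to the following claim: \emph{assuming Gv\"ozdjak's Conjecture, for every $d \in \{1, 2, \ldots, 2\ell-1\}$ there is a graceful permutation of length $2\ell$ with last element $\ell$ and first absolute difference $d$.}

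My plan for proving the claim is case-based.  For most $d$ I would use the $p$-twizzler permutations of Example~\ref{ex:twizzler} with $p = 2\ell - d + 1$: the block-reversal of the first $p$ entries of the Walecki Construction changes the initial Walecki difference $2\ell-1$ to $2\ell - p + 1 = d$, while the freedom in the final length-$r$ block (any translate of a length-$r$ graceful permutation satisfying a prescribed gluing difference) allows us to arrange $a_{2\ell} = \ell$, with Gv\"ozdjak's Conjecture producing the necessary length-$r$ graceful permutation.  For the small values of $d$ (in particular $d = 1$ and $d = 2$), where the twizzler constraint $r \geq p/2$ fails, I would instead invoke Gv\"ozdjak's Conjecture more directly: pick $(a_1, a_2)$ with $|a_1 - a_2| = d$ at the boundary of the interval $\{0, \ldots, 2\ell-1\}$, and complete $(a_1, a_2, \ldots, a_{2\ell})$ by using a shorter graceful permutation supplied by Gv\"ozdjak's Conjecture (after an appropriate order-preserving relabelling of the remaining alphabet) for the remaining entries.

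The main obstacle is this last step for the small-$d$ cases.  Under Gv\"ozdjak's Conjecture one must verify that the parity, magnitude, and sum conditions of Theorem~\ref{th:gvoz} are simultaneously satisfiable at the required endpoints for every $d$ outside the $p$-twizzler range, and that the resulting shorter graceful permutation actually produces the complementary set of differences once spliced into the full-length permutation.  This will require a case split on the parity of both $\ell$ and $d$, and a handful of very small configurations (analogous to those in Example~\ref{ex:gp1}) may need to be constructed by hand.  Once the claim is established for every $d \in \{1, \ldots, 2\ell-1\}$, restricting to the proper divisors of $m$ completes the proof.
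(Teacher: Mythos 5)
Your overall strategy---reduce via Lemma~\ref{lem:seq} and automorphisms to finding, for one representative of each order, a sequencing of $D_{8\ell+2}$ with that first element, then feed Lemma~\ref{lem:g2s}.1 with a $p$-twizzler permutation whose length-$r$ tail is supplied by Gv\"ozdjak's Conjecture---is exactly the paper's. But there is a genuine gap where you yourself flag ``the main obstacle'': you reduce to the claim that \emph{every} $d \in \{1, \ldots, 2\ell-1\}$ occurs as a first absolute difference with last element $\ell$, and you cannot handle the small values of $d$, which is precisely where your chosen orbit representatives $u^d$ ($d$ a proper divisor of $m$, hence $d \leq m/3$) live. Your fallback for small $d$ does not work as stated: after fixing an initial pair with difference $d$, the remaining entries must realise the difference set $\{1,\ldots,2\ell-1\}\setminus\{d\}$ on a non-initial-segment alphabet, which is not a graceful permutation and is not something Gv\"ozdjak's Conjecture provides.

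The missing idea is that you are free to choose a \emph{different} representative of each automorphism orbit, namely one with $d$ large. The paper picks, for each order $o$ with $3 \leq o < 4\ell+1$, some $d \in [\ell+1, 2\ell-1]$ with $u^d$ of order $o$ (possible for $o=3$ since $(4\ell+1)/3$ lies in that interval, and for larger $o$ by counting, since $\ell \geq 5$ there and every such order is represented among any $\ell-1$ consecutive exponents). Then $p = 2\ell - d + 1 \leq \ell$, so $r = 2\ell - p \geq p$ and the twizzler constraint $r \geq p/2$ is automatic; Gv\"ozdjak's Conjecture is invoked only to produce the length-$r$ tail with prescribed first element (forcing junction difference $r$) and last element in $\{\ell-1, \ell\}$, and a complement fixes the last element if needed. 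With that one change your argument closes; without it, the small-$d$ cases remain unproved and the theorem does not follow.
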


\begin{proof}
It is sufficient to consider the cases where~$4\ell + 1$ is composite and the non-identity element missing from~$S$ has order~$o$ with~$3 \leq o < 4\ell+1$.   Let~$d \in [l+1, 2l-1]$ such that $d$ has order~$o$ when considered modulo~$4\ell+1$.  Such a choice is possible when~$o=3$ as $(4\ell+1)/3 \in [l+1, 2l-1]$.  It is possible for larger~$o$ because in this case we must have~$\ell \geq 5$ and an element of every order other than~3 must appear in any list of~$\ell -1$ elements.

We construct a $p$-twizzler permutation of length~$2\ell = pq+r$ with~$p = 2\ell-d+1$ and~$q=1$.  It shall have first difference~$d$ and final element in~$\{ \ell-1, \ell \}$ and the result then follows by Lemma~\ref{lem:g2s}, possibly after taking the complement.

Suppose~$p$ is even.
If  Gv\"ozdjak's Conjecture holds, then there is a graceful permutation~${\bf g}$ of length~$r = 2\ell - p - 1$ with first element $2\ell -3p/2-1$ and final element in $\{ \ell-1-p/2, \ell - p/2 \}$.  Add~$p/2$ to each element to get a translate with first element~$2\ell-p-1$ and last element in~$\{ \ell-1, \ell \}$.  As $|(2\ell - p - 1) - 0| = r$, we have the required properties.

The odd case is similar.
\end{proof}

While the ability to vary~$q$ in the twizzler construction was not used in this proof, we can use this flexibility to exploit that it is known Gv\"ozdjak's Conjecture holds for~$n \leq 20$~\cite{Gvozdjak04}:

\begin{thm}\label{th:constr20}
Suppose that~$2\ell$ can be written $2\ell=pq+r$ with~$q\geq 1$ and~$p/2 \leq r \leq 20$. Then there is a $S$-sequencing for any~$S \subseteq D_{8\ell+2} \setminus \{e\}$, where~$|S| = 8\ell$ and the missing non-identity element of~$S$ has the same order as~$u^{2\ell-p+1}$.
\end{thm}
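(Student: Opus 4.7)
The argument parallels that of Theorem~\ref{th:gvoz2constr} closely; the substantive change is that we now appeal to Gv\"ozdjak's Conjecture only for lengths $n \leq 20$, where it is established in~\cite{Gvozdjak04}, rather than in full generality. Following that proof, I would first reduce the problem: since $m = 4\ell + 1$ is odd, any two elements of $C_m$ of the same order are equivalent under an automorphism of $D_{2m}$, so without loss of generality the missing element of $S$ may be taken to be $u^{2\ell - p + 1}$ itself. By Lemma~\ref{lem:seq} it then suffices to exhibit a sequencing of $D_{8\ell+2}$ whose first element is $u^{2\ell-p+1}$.

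To build such a sequencing I would construct a $p$-twizzler permutation of length $2\ell = pq + r$ exactly as in the proof of Theorem~\ref{th:gvoz2constr}, but for the given $q$ rather than specialising to $q = 1$. By its derivation from the Walecki construction, the first absolute difference of this permutation is automatically $2\ell - p + 1$, so by Lemma~\ref{lem:g2s}(1)---possibly after replacing the permutation by its complement---we need only arrange that its final entry is $\ell$. The last $r$ entries of the $p$-twizzler permutation form a translate (by $pq/2$ when $p$ is even, analogously for $p$ odd) of an \emph{inner} graceful permutation of length $r$; the requirement that this inner permutation glue correctly onto the $pq$-th entry fixes its first element, while the requirement that the translated final entry equal $\ell$ (or $\ell - 1$, remedied by taking the complement at the end) fixes its last element.

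The main obstacle is verifying that such an inner graceful permutation of length $r$ exists. Since $r \leq 20$, Gv\"ozdjak's Conjecture~\cite{Gvozdjak04} supplies one provided the prescribed first and last elements $x, y$ satisfy the three necessary conditions of Theorem~\ref{th:gvoz}. The bound $|x - y| \leq r/2$ reduces to $p \leq 2r$, which is immediate from $p/2 \leq r$; the bounds on $x + y$ translate into elementary inequalities in $p$ and $r$ that are readily checked; and the parity condition $|x - y| \equiv \lfloor r/2 \rfloor \pmod{2}$ can always be arranged by choosing whether to target final entry $\ell$ or $\ell - 1$ (applying the complement in the latter case), combined with short case analysis on the parities of $p$ and $q$. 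Once the inner permutation is supplied, the $p$-twizzler construction delivers the sequencing with first element $u^{2\ell - p + 1}$, and Lemma~\ref{lem:seq} produces the desired $S$-sequencing.
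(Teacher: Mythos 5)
Your proposal is correct and follows essentially the same route as the paper: the paper's proof is precisely the argument of Theorem~\ref{th:gvoz2constr} with general~$q$, using the inner graceful permutation of length~$r\leq 20$ supplied by the verified cases of Gv\"ozdjak's Conjecture, the first absolute difference~$2\ell-p+1$ inherited from the reversed Walecki block, and Lemma~\ref{lem:g2s}(1) (after a possible complement) together with the automorphism reduction. Your added remarks verifying the hypotheses of Theorem~\ref{th:gvoz} are more explicit than the paper's, but do not change the approach.
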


\begin{proof}
The method is exactly as in the proof of Theorem~\ref{th:gvoz2constr}, except we do not insist that~$q=1$.  In this more general setting, we need the graceful permutation of length~$r=2\ell-pq$ to have final element in $\{ \ell - 1-pq/2, \ell - pq/2 \}$ and an appropriate first element.  As Gv\"ozdjak's Conjecture holds for~$r$, we have complete choice of possible first element.
\end{proof}

Theorem~\ref{th:constr20} implies $D_{8\ell+2}$ has an $S$-sequencing for all $S \subseteq D_{8\ell+2} \setminus \{e\}$ with~$|S|=8\ell$ for many values of~$\ell$, including all~$\ell < 35$ for which~$4\ell+1$ is composite and
$$\ell \in \{ 36, 40, 42,  46, 51, 52,  54,  55, 63,  72, 75,82, 85, 90,  94 \}.$$
It gives partial results for many more.  The smallest value of~$\ell$ for which $4 \ell + 1$ is composite and Theorem~\ref{th:constr20} adds no new $S$-sequencings is~420.

When considering Isbell's second and third constructions, we need (cracked) graceful permutations with both first and last element specified.  Further, the first and last elements are both near the center of the possible values. Twizzler permutations, and other similar extension constructions in the literature, tend to push at least one of the values of the endpoints closer to an extreme and so are not very helpful for the current situation.   We are able to make some partial progress by introducing a new extension construction for graceful permutations.

Let~${\bf c} = (c_1, \ldots, c_{2p})$ be a graceful permutation of even length~$2p$ on $[0, 2p-1]$.  Say that~${\bf c}$ is {\em bipartite} if the odd-index elements $\{c_1, c_3, \ldots, c_{2p-1}\}$ are either all at most~$p-1$ or all at least~$p$.  That is, a bipartite graceful permutation has alternating ``small" and ``large" elements.  

In the more general theory of graceful labelings of graphs, bipartite graceful labelings are often known as {\em $\alpha$-labelings}, see~\cite{GallianSurvey}.   

\begin{lem}\label{lem:bgp}{\rm \cite{Kotzig73} }
For any~$x \in [0, p-1]$ there is a bipartite graceful permutation $(c_1, \ldots, c_{2p})$ with~$c_1 = x$.  
If $(c_1, \ldots, c_{2p})$ is a bipartite graceful permutation of length~$2p$ with $c_1 < p$, then~$c_{2p} = c_1 + p$.  
\end{lem}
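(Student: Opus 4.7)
The plan is to handle the two assertions of the lemma separately, starting with the second (the equation $c_{2p} = c_1 + p$) since it reduces to a single sum-of-differences identity. Because the permutation is bipartite and $c_1 < p$, the odd-indexed entries $c_1, c_3, \ldots, c_{2p-1}$ all lie in $[0, p-1]$ (call them $s_1, \ldots, s_p$) and the even-indexed entries $c_2, c_4, \ldots, c_{2p}$ all lie in $[p, 2p-1]$ (call them $L_1, \ldots, L_p$). Every consecutive difference $c_{i+1} - c_i$ is therefore positive, so the absolute values may be dropped. Summing all $2p-1$ differences gives
$$\sum_{i=1}^{p}(L_i - s_i) + \sum_{i=1}^{p-1}(L_i - s_{i+1}) = 2\sum_{i=1}^{p}L_i - L_p - 2\sum_{i=1}^{p} s_i + s_1.$$
Plugging in $\sum_i L_i = p(3p-1)/2$ and $\sum_i s_i = p(p-1)/2$ and equating with $\sum_{k=1}^{2p-1} k = p(2p-1)$, the identity collapses to $L_p - s_1 = p$, i.e., $c_{2p} = c_1 + p$.

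For the existence statement, my plan is to give a direct construction for each $x \in [0, p-1]$. Setting $c_1 = x$, the idea is to alternate small and large entries so that the $2p-1$ (automatically positive) consecutive differences biject with $\{1, \ldots, 2p-1\}$. A natural candidate mirrors the Walecki pattern: take the small entries $s_1, s_2, \ldots, s_p$ to be an appropriate cyclic rotation of $[0, p-1]$ beginning at $x$, and pair them with large entries $L_1, \ldots, L_p$ chosen so that the differences $L_i - s_i$ and $L_i - s_{i+1}$ alternately produce the ``high'' half $\{p, p+1, \ldots, 2p-1\}$ and the ``low'' half $\{1, 2, \ldots, p-1\}$ of the difference set. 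The necessary endpoint identity $L_p = s_1 + p = x + p$ forced by the second part of the lemma tells us in advance what the final large entry must be, which constrains (and thereby guides) the search.

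The step I expect to be the main obstacle is verifying that this explicit alternation really covers every starting value $x \in [0,p-1]$ with no parity or divisibility restrictions hidden inside, since bipartite graceful permutations are known to be more delicate than general graceful ones (compare the necessary conditions in Theorem~\ref{th:gvoz}). A clean alternative is simply to invoke Kotzig's original construction of $\alpha$-labellings of paths with prescribed leaf label from~\cite{Kotzig73}, of which the first clause of the lemma is essentially a restatement; the second clause is the part that then needs an independent proof, and that is exactly what the sum-of-differences calculation above supplies.
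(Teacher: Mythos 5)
The paper gives no proof of this lemma at all --- it is stated as a known result and attributed entirely to Kotzig \cite{Kotzig73} --- so any correct argument you supply is by definition ``different'' from the paper's treatment. Your proof of the second clause is correct and complete: since $c_1<p$ forces the odd-indexed entries to be exactly $[0,p-1]$ and the even-indexed entries to be exactly $[p,2p-1]$, each absolute difference is (large entry) $-$ (small entry), the interior entries each contribute twice to the total while $c_1$ and $c_{2p}$ contribute once, and equating $2\sum L_i - L_p - 2\sum s_i + s_1$ with $p(2p-1)$ does collapse to $L_p - s_1 = p$. One wording slip: the consecutive differences $c_{i+1}-c_i$ are not all positive (they alternate in sign); what is true, and what your displayed sum actually uses, is that each \emph{absolute} difference equals the larger entry minus the smaller. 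For the first clause, your Walecki-style alternation is only a plan --- you correctly identify that making it work for every starting value $x$ is the nontrivial step, and you do not carry it out --- but falling back on Kotzig's construction of $\alpha$-labelings of paths with a prescribed end label is exactly the move the paper itself makes, so this is an acceptable resolution. Net effect: you have supplied an elementary self-contained proof of the half of the lemma that is a pure counting identity, and left the genuinely constructive half to the cited source, which is a reasonable division and arguably an improvement on the paper's bare citation.
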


We can now give the main construction.

\begin{thm}\label{th:insert}
Let ${\bf a} = (a_1, \ldots, a_q)$ be a graceful permutation of length~$q$.    Suppose ${\bf a}$ has adjacent elements~$x$ and~$y$ with $x < y < 2(y-x) = p$.  Then there is a graceful permutation of length~$2p+q$ with first element~$a_1 + p$ and last element~$a_q + p$.
\end{thm}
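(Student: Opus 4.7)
The plan is to splice $2p$ carefully chosen elements into a shifted copy of~${\bf a}$ to produce the desired graceful permutation. First, shift~${\bf a}$ upward by~$p$ to obtain $(a_1+p, a_2+p, \ldots, a_q+p)$; this lives in the middle block $\{p, p+1, \ldots, p+q-1\}$ and preserves the difference multiset $\{1, 2, \ldots, q-1\}$. The $2p$ elements to be spliced will be drawn from $\{0, 1, \ldots, p-1\} \cup \{p+q, p+q+1, \ldots, 2p+q-1\}$ so that, together with the shift of~${\bf a}$, they partition $\{0, 1, \ldots, 2p+q-1\}$. They will be inserted between the consecutive terms $x+p$ and $y+p$ (without loss of generality assume $x$ precedes $y$ in~${\bf a}$; otherwise splice the block in reverse). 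Cutting that edge removes the difference $|y-x| = p/2$ from the outer part, so the spliced block, together with its two new endpoint differences to $x+p$ and $y+p$, must contribute exactly $\{p/2\} \cup \{q, q+1, \ldots, 2p+q-1\}$.

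The key construction comes from Lemma~\ref{lem:bgp}. Since $y \in [0, p-1]$, there is a bipartite graceful permutation $(c_1, c_2, \ldots, c_{2p})$ of $\{0, 1, \ldots, 2p-1\}$ with $c_1 = y$, and the same lemma forces $c_{2p} = y + p$. Define $(c_1', c_2', \ldots, c_{2p}')$ by fixing each small $c_i \in [0, p-1]$ and replacing each large $c_i \in [p, 2p-1]$ by $c_i + q$, so that the large entries land in $[p+q, 2p+q-1]$. The bipartite property guarantees that every consecutive pair of the permutation is one small and one large element, so each internal absolute difference is shifted by exactly~$+q$; hence the internal differences of $(c_1', c_2', \ldots, c_{2p}')$ are precisely $\{q+1, q+2, \ldots, q+2p-1\}$.

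Splice $(c_1', c_2', \ldots, c_{2p}')$ between $x+p$ and $y+p$. The two new endpoint differences are $|x+p - c_1'| = |x+p-y| = p - (y-x) = p/2$, using $p = 2(y-x)$, and $|c_{2p}' - (y+p)| = |(y+p+q) - (y+p)| = q$. Together with the internal differences, this gives the required set $\{p/2\} \cup \{q, q+1, \ldots, q+2p-1\}$; combined with the remaining outer differences $\{1, \ldots, q-1\} \setminus \{p/2\}$, every difference in $\{1, 2, \ldots, 2p+q-1\}$ appears exactly once. The first and last terms of the resulting sequence are $a_1+p$ and $a_q+p$ by construction, and the elements used cover $\{0, 1, \ldots, 2p+q-1\}$ bijectively.

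The one delicate point is arranging for both endpoint differences to fall into place simultaneously. The choice $c_1 = y$ (rather than any other value in $[0, p-1]$) is precisely what makes this work: the hypothesis $p = 2(y-x)$ forces the $x+p$ end to produce difference $p/2 = y-x$, while the shift by~$q$ on large entries forces the $y+p$ end to produce difference~$q$. Once this alignment is recognised, the remainder of the argument is a bookkeeping check that relies only on the structural guarantee of Lemma~\ref{lem:bgp}.
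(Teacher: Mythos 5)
Your proposal is correct and follows essentially the same route as the paper's proof: insert a bipartite graceful permutation with $c_1=y$ (hence $c_{2p}=y+p$) between the shifted copies of $x$ and $y$, adding $q$ to the large entries so the internal differences become $[q+1,2p+q-1]$ and the two join differences become $p/2$ and $q$. The only cosmetic difference is that you describe the shift as acting on the \emph{large} elements while the paper phrases it as acting on the even-indexed positions, which coincide because $c_1$ is small.
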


\begin{proof}
Let ${\bf c} = (c_1, \ldots, c_{2p})$ be a bipartite graceful permutation with $c_1 = y$.  This exists by Lemma~\ref{lem:bgp}.  Also by that lemma, we know that $c_{2p} = y + p$.

Without loss of generality, assume that $a_i = x$ and $a_{i+1} = y$ (if~$x$ and~$y$ appear in the other order, replace~${\bf a}$ with its reverse and then reverse the resulting graceful permutation once the construction is complete).  Consider the sequence
$$( a_1 + p, \ldots, a_i + p, c_1, c_2+q, \ldots, c_{2p-1}, c_{2p} + q, a_{i+1}+p, \ldots, a_q + p).$$
That is, the first~$i$ elements of ${\bf a}$ with~$p$ added to each, followed by~${\bf c}$ with~$q$ added to each element in an even position within~${\bf c}$, followed by the last~$q-i$ elements of ${\bf a}$ with~$p$ added to each.   We have
$$\{ c_1, c_3, \ldots, c_{2p-1} \} = [0, p-1], \hspace{3mm} \{ a_1 + p, \ldots, a_q + p \} = [p, p+q-1]$$
and
$$\{ c_2+q, c_4+q, \ldots, c_{2p}+q \} = [p+q, p+q-1],$$
so the sequence has the required elements.

As ${\bf a}$ is a graceful permutation, we have the absolute differences $[1, q-1] \setminus \{y-x\}$ and as ${\bf c}$ is a bipartite graceful permutation, we have the absolute differences $[ q+1, 2p+q-1 ]$.  Two additional absolute differences are given at the joins:
$$ |c_1 - (a_i+p)| =  (x +2(y-x)) - y = y-x$$
and
$$ | (a_{i+1}+p) -  (c_{2p} + q)| = (y + p+q) - (y+p) = q.$$
Hence our sequence is a graceful permutation. Its first element is~$a_1+p$ and its last element is $a_q+p$.
\end{proof}

Call the graceful permutation obtained from a graceful permutation~${\bf a}$ and a bipartite graceful permutation~${\bf c}$ via the method of Theorem~\ref{th:insert}'s proof the {\em insertion} of  ${\bf c}$ into~${\bf a}$ at $i$. 

\begin{exa}\label{ex:insert}
The insertion of 
$${\bf c} = (4,11,5,10,6,9,7,8,0,15,1,14,2,13,3,12)$$
into ${\bf a} = (1,6,0,4,3,5,2)$ at~$i = 3$ (where $x=0$, $y=4$, $2(y-x) = p = 8$ and $q=7$)  is
$$(9,14,8, 4,18,5,17,6,16,7,15,0,22,1,21,2,20,3,19, 12,11,13,10),$$
a graceful permutation of length~$23$.
\end{exa}

We can often  guarantee options to choose as~$x$ and~$y$ in Theorem~\ref{th:insert}:

\begin{lem}\label{lem:insertg}
Let~${\bf a}$ be a graceful permutation of length~$q$.  There is a valid insertion point for a bipartite graceful permutation of length~$2p$ whenever $p$ is even and $2(q-1)/3 < p < 2q$.
\end{lem}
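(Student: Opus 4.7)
The plan is to locate within ${\bf a}$ the unique adjacent pair whose absolute difference equals $p/2$, and to show that at least one of ${\bf a}$ or its complement ${\bf \bar{a}}$ places this pair in a position where Theorem~\ref{th:insert} applies.

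First I would use that, since ${\bf a}$ is a graceful permutation of length $q$, the $q-1$ adjacent absolute differences form the set $\{1, 2, \ldots, q-1\}$, each occurring exactly once. Combined with $p$ even and $p < 2q$ this gives $p/2 \in \{1, \ldots, q-1\}$ (note that $p \geq 2$ follows from $p$ being a positive even integer with $p > 2(q-1)/3 \geq 0$; the case $q = 1$ is vacuously ruled out by $p < 2q = 2$). Hence there is exactly one pair of adjacent elements $x < y$ in ${\bf a}$ with $y - x = p/2$.

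Next, Theorem~\ref{th:insert} applied to this pair requires $y < 2(y - x) = p$, that is $x < p/2$. If this fails, I would pass to the complement ${\bf \bar{a}} = (q-1-a_1, \ldots, q-1-a_q)$, which is itself a graceful permutation and whose adjacent absolute differences appear in the same positions as in ${\bf a}$. The unique pair with difference $p/2$ in ${\bf \bar{a}}$ is $\{q-1-x, q-1-y\}$, whose larger element is $q-1-x$; the theorem's condition now reads $q-1-x < p$, i.e.\ $x > q-1-p$.

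Combining these, the only obstruction is $p/2 \leq x \leq q-1-p$, and this interval is non-empty if and only if $p/2 \leq q-1-p$, i.e.\ $p \leq 2(q-1)/3$. Under the hypothesis $p > 2(q-1)/3$ the interval is empty, so at least one of ${\bf a}$ or ${\bf \bar{a}}$ must supply a valid insertion point, which is what the lemma asserts. There is no real obstacle here: the key conceptual step is recognising that the complement provides precisely the $x \leftrightarrow q-1-x$ symmetry needed to convert the $p > 2(q-1)/3$ bound into a sufficient condition, after which the argument reduces to a short interval calculation.
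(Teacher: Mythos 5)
Your proof is correct and follows essentially the same route as the paper's: both locate the unique adjacent pair with absolute difference $p/2$ (which exists since $p$ is even and $p<2q$) and use the complement ${\bf \bar{a}}$ to handle the case where Theorem~\ref{th:insert} does not apply directly. Your organization via the empty obstruction interval $p/2 \leq x \leq q-1-p$ is in fact slightly cleaner than the paper's case split on $q\leq p$ versus $q>p$, but it is not a genuinely different argument.
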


\begin{proof}
The conditions that $p$ be even and~$p < 2q$ guarantee that $p/2$ is an absolute difference of~${\bf a}$.  Let~$p/2 = y-x$ for some adjacent elements~$x,y$ of~${\bf a}$.  

If~$q \leq p$ then $2(y-x) \geq q > y$ and the conditions of Theorem~\ref{th:insert} are met.  If $q>p$ then in the complement of~${\bf g}$ there are adjacent elements~$y' = q-1-x$ and~$x' = q-1-y$ with~$y'-x' = p/2$ and $2(y'-x') > y' >x'$.  Apply Theorem~\ref{th:insert} and take the complement of the result.
\end{proof}

Combining the insertion method with Lemma~\ref{lem:g2s} we can prove the following result about the existence of $S$-sequencings.

\begin{thm}
Let~$\ell$ be an odd multiple of~$3$.
Let $S \subseteq D_{8\ell + 6} \setminus \{ e \}$ with $|S| = 8\ell + 4$ and $x$ the non-identity element not in~$S$.  If~$x$ has order $(4\ell + 3)/3$ then $D_{8\ell+6}$ has an $S$-sequencing.
\end{thm}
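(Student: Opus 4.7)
My plan is to reduce the problem to constructing a single sequencing of $D_{2m}$ (with $m = 4\ell+3$) whose first element has order $m/3$, and then to pull such a sequencing straight off the shelf from Example~\ref{ex:gp1}, without any new combinatorial work. Write $\ell = 3k$ with $k$ odd, so $m = 3(4k+1)$. Since $m$ is odd, the coset $C_mv$ consists of involutions, and as $m/3 \geq 5 > 2$ the missing element $x$ must lie in $C_m \setminus \{e\}$. By Lemma~\ref{lem:seq} it therefore suffices to exhibit a sequencing of $D_{2m}$ whose first element equals~$x$. The maps $u \mapsto u^a$, $v \mapsto v$ with $\gcd(a,m) = 1$ are automorphisms of $D_{2m}$, and a brief Chinese remainder theorem argument shows that they act transitively on elements of $C_m$ of order $m/3$: writing two such elements as $u^{3d'}$ and $u^{3e'}$ with $d', e'$ coprime to $m/3$, solve $ad' \equiv e' \pmod{m/3}$ and lift to $a$ coprime to~$m$. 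Consequently, it is enough to produce one sequencing of $D_{2m}$ whose first element has order~$m/3$.

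For that production I plan to invoke the graceful permutation
\begin{flushleft}
$(\ell, 0, 2\ell, 1, 2\ell-1, \ldots, (\ell-1)/2, (3\ell+1)/2,$
\end{flushleft}
\begin{flushright}
$(3\ell-1)/2, (\ell+1)/2, (3\ell-3)/2, (\ell+3)/2, \ldots, \ell+1, \ell-1)$
\end{flushright}
of length $2\ell+1$ from Example~\ref{ex:gp1}, which is valid for any odd~$\ell$. This permutation has $c_1 = \ell$, $c_{2\ell+1} = \ell-1$, and first absolute difference $d_1 = |0 - \ell| = \ell$, so it satisfies the hypotheses of Lemma~\ref{lem:g2s}(3). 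Since $\ell = 3k$ and $m = 3(4k+1)$, we compute $\gcd(\ell,m) = 3\gcd(k, 4k+1) = 3$, so $u^{\ell}$ has order $m/\gcd(\ell,m) = m/3$. Applying Lemma~\ref{lem:g2s}(3) directly gives a sequencing of $D_{8\ell+6}$ whose first element is $u^{\ell}$, and combining with the automorphism argument and Lemma~\ref{lem:seq} produces the required $S$-sequencing.

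The ``main obstacle'' is really just the observation that makes the proof work, namely that the first absolute difference of Isbell's off-the-shelf permutation is exactly~$\ell$, and that $\gcd(\ell, 4\ell+3) = \gcd(\ell, 3)$ equals~$3$ precisely under the hypothesis that $\ell$ is a multiple of~$3$. Unlike the earlier results in this section, no new (cracked) graceful permutation needs to be constructed; the content is confined to wiring together Example~\ref{ex:gp1} with Lemmas~\ref{lem:seq} and~\ref{lem:g2s}.
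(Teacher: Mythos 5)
Your proof is correct, but it takes a genuinely different --- and considerably shorter --- route than the paper's. The paper insists on graceful permutations of length~$2\ell+1$ with first element~$\ell$, last element~$\ell-1$ \emph{and first absolute difference exactly~$3$}, so that the resulting sequencing begins with~$u^3$, an element of order~$(4\ell+3)/3$ because $3 \mid 4\ell+3$ when $3 \mid \ell$. Producing such permutations for every odd multiple~$\ell$ of~$3$ costs the paper the insertion machinery of Theorem~\ref{th:insert} and Lemma~\ref{lem:insertg}, a recursive reduction on the length, and ad hoc base permutations for $q \in \{11,15,19\}$. You instead observe that the off-the-shelf Isbell permutation of Example~\ref{ex:gp1} already has first absolute difference~$\ell$, and that $u^{\ell}$ has order $(4\ell+3)/\gcd(\ell,4\ell+3) = (4\ell+3)/\gcd(\ell,3) = (4\ell+3)/3$ precisely under the hypothesis $3 \mid \ell$; Lemma~\ref{lem:g2s}(3) then applies immediately, and your reduction via Lemma~\ref{lem:seq} together with the fact that elements of equal order in $D_{2m}$ ($m$ odd) are automorphic is the same framing the paper uses. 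The trade-off is that your shortcut is special to the divisor~$3$: it exploits the identity $\gcd(\ell,4\ell+3)=\gcd(\ell,3)$ and cannot be tuned to produce first elements of order $m/d$ for other~$d$, whereas the paper's heavier construction is the template that generalizes to Theorem~\ref{th:fixd} for arbitrary fixed~$d$. For this one statement, your argument is valid and strictly simpler.
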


\begin{proof}
We aim to use the third clause of Lemma~\ref{lem:g2s} and hence are looking to construct graceful permutations of length~$2\ell+1$ with first element~$\ell$ and final element~$\ell - 1$.    In all cases the first absolute difference of this graceful permutation will be~3.   For a given target length, we construct the permutation with applications of the insertion construction (usually repeatedly).

Given a graceful permutation of odd length~$q$ with first element~$(q-1)/2$, first absolute difference~3 and final element $(q-3)/2$ (that is, of the form we are searching for) we can use the insertion method  and Lemma~\ref{lem:insertg} to find a graceful permutation of length~$2\ell+1 = q + 2p$ of the form we require, provided that $2(q-1)/3 < p < 2q$ and $p/2 \neq 3$.

Therefore, in general, success at length~$2\ell+1$ follows from success at length~$q$ whenever $(2\ell+1)/5 < q < (6\ell+7)/7$.   There is one exception: we cannot go from~$q=7$ to~$2\ell+1 = 19$ as this requires $p/2 = 3$.  Hence it is sufficient to find successful permutations for $q \in \{ 7, 11, 15, 19 \}$.  

The construction in Example~\ref{ex:gp1} covers the case~$q=7$.  Here are graceful permutations for the remaining cases:
$$\begin{array}{c}
(5, 8, 0, 10, 1, 6, 7, 3, 9, 2, 4), \\
(7, 10, 0, 14, 1, 13, 2, 8, 3, 12, 4, 11, 9, 5, 6), \\
(9, 12, 0, 18, 1, 17, 2, 16, 3, 7, 13, 5, 14, 4, 15, 8, 6, 11, 10). \\
\end{array}$$
\end{proof}

More generally, the method of proof here can be applied for any fixed element in place of~3.

\begin{thm}\label{th:fixd}
Fix~$d$.  Let $S \subseteq D_{2m} \setminus \{ e \}$ with $|S| = 2m-2$ and $x$ the non-identity element not in~$S$.  If $m \equiv 5,7 \pmod{8}$ and~$x$ has order $m/d$, then $D_{2m}$ has an $S$-sequencing, except possibly in finitely many cases.
\end{thm}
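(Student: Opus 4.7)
The plan is to generalise the method of the preceding theorem, in which the case $d=3$ is treated, to arbitrary fixed $d$. Because $x$ has order $m/d$ we have $d \mid m$, so $u^d \in C_m$ itself has order $m/d$; by Lemma~\ref{lem:seq} it is enough to exhibit a sequencing of $D_{2m}$ whose first element is $u^d$. The hypothesis $m \equiv 5 \pmod 8$ (respectively $7 \pmod 8$) forces $\ell$ to be odd when we write $m = 4\ell+1$ (respectively $m = 4\ell+3$), so clauses~2 and~3 of Lemma~\ref{lem:g2s} apply. In each case the problem reduces to constructing a graceful permutation of length $n \in \{2\ell, 2\ell + 1\}$ with first element $\ell$, last element $\ell-1$, and first absolute difference $d$.

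I propose to build these graceful permutations by iterating the insertion construction of Theorem~\ref{th:insert}. Starting from a base graceful permutation $\mathbf{a}$ of short length $q$ with first absolute difference $d$, first element $(q-1)/2$ (odd $q$) or $q/2$ (even $q$), and last element one less, inserting a bipartite graceful permutation of length $2p$ yields a graceful permutation of length $q+2p$ with first element $\ell$ and last element $\ell-1$. A short check of the proof of Theorem~\ref{th:insert} shows that the first absolute difference of the extended permutation equals $d$ whenever $p \neq 2d$; the troublesome case occurs only when $p/2 = d$ and the unique pair of adjacent elements of $\mathbf{a}$ realising this difference is the initial pair $(a_1,a_2)$ with $a_1 > a_2$, in which event the new first difference becomes $q$ rather than $d$.

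By Lemma~\ref{lem:insertg} the admissible even parameters $p$ lie in the window $2(q-1)/3 < p < 2q$, which widens unboundedly as $q$ grows. Hence for every sufficiently large target length $n$ a suitable $q$ together with a valid even $p \neq 2d$ can be chosen, and iterating the construction produces the required graceful permutations. Because insertion increments the length by $2p$ with $p$ even, it preserves $q$ modulo~$4$; moreover the target $n$ lies in a single residue class mod~$4$ in each case (namely $3$ when $m \equiv 7 \pmod 8$ and $2$ when $m \equiv 5 \pmod 8$), so for each $d$ only one parity track needs to be seeded.

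The main obstacle will be the explicit construction of the base cases. For each fixed $d$ one needs a finite collection of small graceful permutations with first absolute difference $d$, first element near $q/2$, and last element one less, covering enough values of $q$ in the relevant residue class to seed the iteration at the smallest $\ell$ from which the window $(2(q-1)/3,2q)$ contains an even integer different from $2d$. For $d=3$ this was done by hand in the preceding theorem with four base cases $q\in\{7,11,15,19\}$; for general $d$ no closed-form family seems immediate, but one expects either an ad hoc construction analogous to the zig-zag sequences of Examples~\ref{ex:gp1} and~\ref{ex:hgp}, or a finite computer search, to supply them. The small values of $\ell$ not handled by the base cases or the iteration then constitute the finitely many exceptional cases permitted by the statement.
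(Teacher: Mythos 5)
Your overall strategy coincides with the paper's: reduce, via Lemma~\ref{lem:seq} and the automorphism argument together with clauses~2 and~3 of Lemma~\ref{lem:g2s}, to producing graceful permutations of length $2\ell$ or $2\ell+1$ with first element $\ell$, last element $\ell-1$ and controlled first absolute difference, and then grow such permutations by iterating the insertion of Theorem~\ref{th:insert} via Lemma~\ref{lem:insertg}, which changes the length by a multiple of $4$ and so stays on one parity track. Your observation that the first absolute difference survives insertion only when the inserted block is not forced onto the initial edge (i.e.\ when $p/2$ is not the first difference realised solely there) is correct and is a point the paper's own two-sentence proof glosses over entirely.

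The genuine gap is precisely the item you defer: the seeds. Since the theorem is asserted for every fixed $d$, "an ad hoc construction or a finite computer search" for each $d$ separately does not prove the statement; a uniform base case as a function of $d$ is needed, and supplying it is the only substantive content of the paper's proof. The paper takes the seeds from Example~\ref{ex:gp1}. Note first that $d\mid m$ with $m$ odd forces $d$ odd. The length-$(2\ell+1)$ permutation $(\ell,0,2\ell,1,2\ell-1,\ldots,\ell+1,\ell-1)$ of Example~\ref{ex:gp1} has first element $\ell$, last element $\ell-1$ and first absolute difference $\ell$; taking $\ell=d$ (odd, as required) gives a seed with first difference exactly $d$ for the $m\equiv 7\pmod 8$ track. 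The even-length companion $(\ell,2\ell-1,0,2\ell-2,\ldots,\ell+1,\ell-1)$ has first absolute difference $\ell-1$; taking $\ell=2d+1$ (again odd) gives first difference $2d$, and $\gcd(2d,m)=d$ for odd $m$ divisible by $d$, which seeds the $m\equiv 5\pmod 8$ track. With these in hand your window and parity analysis does the rest: iterated insertion reaches all sufficiently large lengths in the correct residue class modulo~$4$ while preserving the first difference and the endpoint pattern $(\ell,\ldots,\ell-1)$, and one then restricts attention to those resulting $m$ that are multiples of $d$, leaving only finitely many exceptions.
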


\begin{proof}
Example~\ref{ex:gp1} gives graceful permutations of lengths $2\ell$ and $2\ell + 1$ that can be used in  the second and third clauses of Lemma~\ref{lem:g2s} to give an $S$-sequencing for $S = D_{2m}  \setminus \{ e, u^d \}$ for two values of~$m$, one of the form~$4\ell+1$ and one of the form~$4\ell +3$ (odd $\ell$). 

We can then repeatedly use Lemma~\ref{lem:insertg} to construct lots of additional graceful permutations that imply success for $S = D_{2m'}  \setminus \{ e, u^d \}$ for all but finitely many values of~$m'$.
\end{proof}

For cracked graceful permutations the same proof as Gv\"ozdjak's for part~1 of  Theorem~\ref{th:gvoz} applies:

\begin{lem}
Let~$0 \leq x,y \leq n$ and~$x \neq y$.  If there is a cracked graceful permutation of length~$n$ with first element~$x$ and last element~$y$ then $|x-y|$ has the same parity as $\lfloor n/2 \rfloor$.
\end{lem}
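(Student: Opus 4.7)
The plan is to mimic exactly the argument Gv\"ozdjak uses for ordinary graceful permutations, noting that the parity condition depends only on the multiset of absolute differences, not on which ground set the entries come from. Let $(a_1, a_2, \ldots, a_n)$ be a cracked graceful permutation with $a_1 = x$, $a_n = y$, and absolute differences $(b_1, \ldots, b_{n-1})$, which by definition form a permutation of $\{1, 2, \ldots, n-1\}$.

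First I would telescope: writing $\varepsilon_i = \mathrm{sgn}(a_{i+1} - a_i) \in \{+1,-1\}$, we have
$$y - x \;=\; \sum_{i=1}^{n-1}(a_{i+1}-a_i) \;=\; \sum_{i=1}^{n-1}\varepsilon_i b_i.$$
Reducing modulo $2$, each $\varepsilon_i$ disappears, so
$$y - x \;\equiv\; \sum_{i=1}^{n-1} b_i \;\equiv\; 1 + 2 + \cdots + (n-1) \;=\; \binom{n}{2} \pmod{2}.$$

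The final step is a routine case check on $n \bmod 4$ to verify $\binom{n}{2} \equiv \lfloor n/2 \rfloor \pmod 2$: for $n \equiv 0, 1 \pmod 4$ both quantities are even, while for $n \equiv 2, 3 \pmod 4$ both are odd. Therefore $|x-y| \equiv y - x \equiv \lfloor n/2 \rfloor \pmod 2$, as claimed.

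There is essentially no obstacle here: the argument is purely arithmetic and never uses the hypothesis that $\{a_1,\ldots,a_n\} = \{0,\ldots,n-1\}$, so allowing the crack (an omitted element of $\{0,\ldots,n\}$) costs nothing. This is exactly why the paper can cite Gv\"ozdjak's proof verbatim, and it also explains why the analogous upper bound $|x-y| \le n/2$ and the sum bound from Theorem~\ref{th:gvoz} are not asserted in the cracked setting: those bounds rely on the extremal structure of the ground set and would have to be re-examined, whereas the parity condition survives unchanged.
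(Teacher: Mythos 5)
Your proof is correct and is essentially identical to the paper's: both telescope $y-x$ as a signed sum of the absolute differences, reduce mod $2$ to get $\binom{n}{2}$, and check that this matches $\lfloor n/2 \rfloor$. No further comment is needed.
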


\begin{proof}
Let~$(a_1, a_2, \ldots, a_n)$, where~$a_1=x$ and~$a_n=y$, be a cracked graceful permutation.  Then
\begin{eqnarray*}
|x-y| &\equiv& |a_1 - a_2| + |a_2 - a_3| + \cdots + |a_{n-1} - a_n|  \\
       & \equiv &  n(n-1)/2  \ \equiv \lfloor n/2 \rfloor \pmod{2}
\end{eqnarray*}
as required.
\end{proof}

This result, along with Theorem~\ref{th:gvoz}, implies that neither Isbell's second nor third constructions can successfully cover all dihedral groups of the form~$D_{8\ell + 6}$ alone.

It is possible to extend the insertion method to cracked graceful permutations and then employ a similar approach to that of Theorem~\ref{th:fixd}.  However, we are lacking a graceful permutation that would get us started in the case~$m \equiv 3 \pmod{8}$.  Rather than pursue this route here, we use the specific construction of the cracked graceful permutation in Example~\ref{ex:hgp}, which allows a more efficient way to cover some small initial elements of sequencings of~$D_{8\ell+6}$.

\begin{thm}\label{lem:isb3start}
For $d \in \{5,6,7\}$ and even~$\ell > 2$, there is a cracked graceful permutation of length~$2\ell - 1$ with crack~$\ell -2$ that starts with~$\ell + 1$, ends with~$\ell$ and has first absolute difference~$d$, except when~$\ell=4$ and~$d \in \{6,7\}$.
\end{thm}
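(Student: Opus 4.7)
The plan is to reuse the cracked graceful permutations of Example~\ref{ex:hgp}, altering only the ad hoc initial segment so that the first absolute difference becomes~$d \in \{5,6,7\}$. The zigzag middle and the final ad hoc tail (ending in~$\ell$), as well as the crack at~$\ell-2$, will be left intact; by design this leaves the last $2\ell - 1 - t$ elements and their absolute differences unchanged, where $t$ is the length of the ad hoc initial segment. Hence the task reduces to producing, in each case, a permutation of the same multiset of ``initial" integers (a subset of $[0,2\ell-1]\setminus\{\ell-2\}$ determined by the construction) whose starting element is~$\ell+1$, whose ending element is the same as before (so that the transition to the first zigzag element retains its absolute difference), and whose multiset of absolute differences agrees with the one it replaces, except that the first difference is now~$d$.

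The exceptional case $\ell=4$, $d\in\{6,7\}$ is genuinely impossible: starting from $\ell+1=5$, the element $5\pm d$ lies outside $[0,2\ell-1]=[0,7]$. For $\ell=4$, $d=5$ the permutation $(5,0,6,7,3,1,4)$ from Example~\ref{ex:hgp} already works. For $\ell=6,8$, one provides explicit ad hoc permutations by hand. For $\ell\geq 10$ the argument splits into three residue classes $\ell\pmod{6}$, mirroring Example~\ref{ex:hgp}.

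In each residue class I would take the opening block from Example~\ref{ex:hgp} and check which integers it uses and which differences it produces; then write down an alternative opening block using exactly the same integers and differences, but whose first two terms are $\ell+1$ and $\ell+1\pm d$. When the original first difference already equals~$d$ (for instance $d=7$ with $\ell\equiv 2\pmod 6$), no alteration is needed. Otherwise one can ``rotate" the first few entries: swap the second entry with a later entry at the correct signed distance from~$\ell+1$, and then permute the intervening entries so that the multiset of differences is preserved. Because the ad hoc block is short and the values $\ell\pm 1,\ell\pm 3,\ell-4,\ell-5,\ell-6,\ell+2,\ldots$ supply ample flexibility for $d\in\{5,6,7\}$, this rearrangement is always possible for~$\ell$ sufficiently large.

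The main obstacle is the bookkeeping: verifying in each of roughly nine $(d,\ell\bmod 6)$ cases that the new opening segment (i)~uses the correct multiset of integers from $[0,2\ell-1]\setminus\{\ell-2\}$, (ii)~produces the correct multiset of absolute differences to complement those supplied by the unchanged zigzag and tail, (iii)~joins to the first zigzag entry with the required absolute difference, and (iv)~works uniformly for all sufficiently large $\ell$ in the class, with only a handful of small $\ell$ requiring separate ad hoc sequences. Once these finitely many explicit beginnings are tabulated, the theorem follows by concatenation with the common middle and tail from Example~\ref{ex:hgp}.
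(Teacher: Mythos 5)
Your strategy is exactly the one the paper uses: keep the zigzag middle, the long final zigzag and the closing ad hoc elements of Isbell's cracked graceful permutation from Example~\ref{ex:hgp} fixed, and replace only the ad hoc opening block by another arrangement of the same integers with the same multiset of absolute differences and the same endpoints' roles (start at $\ell+1$, hand off to the first zigzag entry with the same difference), but with first absolute difference~$d$. Your explanation of why $\ell=4$, $d\in\{6,7\}$ must be excluded ($5\pm d$ falls outside the symbol set) is correct, and $(5,0,6,7,3,1,4)$ does settle $\ell=4$, $d=5$.

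The gap is that the entire content of the proof lies in the step you defer. The claim that one can always ``rotate'' or permute the opening block to realize first difference~$d$ while preserving the difference multiset is not self-evident and is not argued; in the paper it is established by exhibiting, for each pair $(d,\ell\bmod 6)$, one explicit fixed prefix --- in translated coordinates, an arrangement of an interval $[-3t-c,\,3t+c]\setminus\{-1,1\}$ with prescribed first and last entries and prescribed difference set, for a specific small~$t$ --- which then works uniformly for all sufficiently large~$\ell$ in that residue class (Table~\ref{tab:crackedstart}). A subtlety you gloss over is that the replacement must be a single sequence independent of~$\ell$ (only translated), which forces a fixed~$t$ and hence pushes more small cases outside the general argument than you anticipate: besides $\ell\in\{4,6,8\}$, the parameters $(\ell,d)\in\{(10,6),(10,7),(12,5),(14,6),(16,7)\}$ also require separate explicit cracked graceful permutations (Table~\ref{tab:crackedend}). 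Until those finitely many sequences are actually written down and verified, the theorem is not proved; as it stands your sketch is a correct plan for the paper's proof rather than a proof.
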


\begin{proof}
We prove the bulk of this result by varying the start of the Isbell construction given in Example~\ref{ex:hgp}.

When~$\ell \equiv 0 \pmod{6}$, for~$t < \ell/6$ the first $6t + 3$ elements of Isbell's cracked graceful permutation are a translate of an arrangement of the elements~$[-3t-2, 3t+2]  \setminus \{ -1, 1 \}$, starting with~2 and ending with~$-3t$, and having absolute differences~$[2, 6t+4]  \setminus \{3\}  $.

When~$\ell \equiv 2 \pmod{6}$ with~$\ell \geq 14$, for~$t < (\ell-8)/6$  the first $6t + 11$ elements of Isbell's cracked graceful permutation are a translate of an arrangement of the elements~$[-3t-6, 3t+6]  \setminus \{ -1, 1 \}$, starting with~2 and ending with~$-3t-4$, and having absolute differences~$[2, 6t+12] \setminus \{3\}   $.

When~$\ell \equiv 4 \pmod{6}$ with~$\ell \geq 10$, for~$t < (\ell-4)/6$  the first $6t + 7$ elements of Isbell's cracked graceful permutation are a translate of an arrangement of the elements~$[-3t-4, 3t+4]  \setminus \{ -1, 1 \}$, starting with~2 and ending with~$-3t-2$, and having absolute differences~$[2, 6t+8]  \setminus \{3\}  $.

In each case we may substitute an alternative sequence with these properties to obtain an alternative cracked graceful permutation.   Table~\ref{tab:crackedstart} gives the sequences required to prove the result, except when~$(l,d)$ is one of
$$ (4,5), (6,5), (6,6), (6,7), (8,5),(8,6),(8,7), (10,6),(10,7), (12,5), (14,6), (16,7)    .$$ 
Cracked graceful permutations with the stated properties for these parameters are given in Table~\ref{tab:crackedend}.

\begin{table}
\caption{Sequences for use in the proof of Lemma~\ref{lem:isb3start}}\label{tab:crackedstart}
$$
\begin{array}{rrrl}
\hline
\ell \hspace{-3mm}  \pmod{6} & d &  t & {\rm sequence}  \\
\hline
0 & 5 & 2 & (2, -3, 3, -4, 7, -7, 8, -8, 5, -5, 4, 0, -2, 6, -6) \\
   & 6 & 1 & (2, -4, 5, -5, 3, -2, 0, 4, -3) \\
   & 7 & 1 & (2, -5, 5, -4, 4, 0, -2, 3, -3) \\
2 & 5 & 0 & (2, -3, 3, -6, 6, -5, 5, -2, 0, 4, -4) \\
   & 6 & 1 & (2, -4, 3, -2, 0, 4, -6, 7, -8, 8, -9, 9, -5, 6, -3, 5, -7) \\
   & 7 & 0 & (2, -5, 5, -6, 6, -3, 3, -2, 0, 4, -4) \\
4 & 5 & 0 & (2, -3, 3, -4, 4, 0, -2) \\
   & 6 & 1 & (2, -4, 3, -2, 0, 4, -7, 7, -6, 6, -3, 5, -5) \\
   & 7 & 2 & (2, -5, 4, 0, -2, 3, -3, 5, -7, 6, -4, 7, -10, 10, -9, 9, -6, 8, -8 ) \\
\hline
\end{array}
$$
\end{table}

\begin{table}
\caption{Some cracked graceful permutations}\label{tab:crackedend}
$$
\begin{array}{rrl}
\hline
\ell  & d  & {\rm cracked \ graceful \ permutation}  \\
\hline
5 & 5 & (5, 0, 1, 7, 3, 6, 4) \\
6 & 5 & (7, 2, 5, 11, 3, 1, 8, 9, 0, 10, 6) \\
   & 6 & (7, 1, 5, 10, 0, 3, 11, 2, 9, 8, 6) \\
   & 7 & (7, 0, 2, 10, 1, 11, 5, 9, 8, 3, 6) \\
8 & 5 & (9, 4, 12, 3, 13, 14, 2, 15, 1, 5, 11, 0, 7, 10, 8) \\
   & 6 & (9, 3, 10, 14, 2, 1, 15, 5, 7, 12, 4, 13, 0, 11, 8) \\
   & 7 & (9, 2, 13, 11, 7, 1, 14, 0, 10, 15, 3, 12, 4, 5, 8) \\
10 & 6 & (11, 5, 13, 15, 0, 1, 19, 2, 18, 4, 16, 3, 12, 9, 14, 7, 17, 6, 10) \\
     & 7 & (11, 4, 13, 15, 0, 5, 6, 12, 9, 17, 7, 18, 1, 19, 3, 16, 2, 14, 10) \\
12 & 5 &  (13, 8, 15, 17, 0, 23, 1, 2, 22, 3, 21, 5, 19, 4, 16, 6, 14, 11, 7, 20, 9, 18, 12) \\
 14 & 6 & (15, 9, 17, 19, 1, 27, 2, 26, 3, 25, 4, 0, 20,   \\
      &     & \hspace{5mm} 10, 21, 7, 16, 23, 8, 11, 24, 5, 22, 6, 18, 13, 14) \\  
  16 & 7 & (17, 10, 19, 21, 1, 31, 2, 30, 3, 29, 4, 28, 5, 27, 6, \\
       &    & \hspace{5mm}  0, 18, 15, 11, 25,  20, 8, 23, 7, 26, 9, 22, 12, 13, 24, 16) \\
\hline
\end{array}
$$
\end{table}

\end{proof}

This immediately implies:

\begin{cor}\label{cor:cracked}
Let~$\ell$ be even.
Let $S \subseteq D_{8\ell + 6} \setminus \{ e \}$ with $|S| = 8\ell + 4$ and $x$ the non-identity element not in~$S$.  If~$x$ has order $(4\ell + 3)/d$ for $d \in \{ 3,5,7 \}$ then $D_{8\ell+6}$ has an $S$-sequencing.
\end{cor}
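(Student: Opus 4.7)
My plan is to string together Theorem~\ref{lem:isb3start}, Lemma~\ref{lem:cg2s}(2), and Lemma~\ref{lem:seq}, translating the choice of excluded element~$x$ into a choice of first absolute difference~$d_1$ for a cracked graceful permutation. Setting $m = 4\ell + 3$, I would first observe that $x$ must lie in $C_m \setminus \{ e \}$, since every non-identity element of $C_m v$ has order~$2$ while $(4\ell+3)/d$ is an odd integer exceeding~$2$ in every case where the hypothesis applies.

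Next I would match each allowed value of $d$ to a first absolute difference $d_1$ so that $u^{d_1}$ has order $m/d$. For $d = 3$, take $d_1 = 6$; since $m$ is odd and $3 \mid m$ (forced by $(4\ell+3)/3 \in \Z$), we get $\gcd(6,m) = 3$. For $d = 5$ take $d_1 = 5$, and for $d = 7$ take $d_1 = 7$, with the divisibility $d \mid m$ in each case coming from the hypothesis. Theorem~\ref{lem:isb3start} then supplies a cracked graceful permutation of length $2\ell - 1$ with crack $\ell - 2$, first element $\ell + 1$, last element $\ell$, and first absolute difference $d_1$. Feeding this into Lemma~\ref{lem:cg2s}(2) produces a sequencing of $D_{8\ell+6}$ beginning with $u^{d_1}$. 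Because $m$ is odd, the automorphisms $u \mapsto u^a$ with $\gcd(a, m) = 1$ act transitively on the elements of $C_m$ of each fixed order, so this sequencing can be transported to one whose first element is the prescribed $x$; Lemma~\ref{lem:seq} then delivers the required $S$-sequencing.

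The only point needing care is the exception $\ell = 4$ with $d_1 \in \{6, 7\}$ in Theorem~\ref{lem:isb3start}. But when $\ell = 4$ we have $m = 19$, which is prime, so no element of $C_m$ has order $(4\ell+3)/d$ for any $d \in \{3, 5, 7\}$, and the corollary is vacuous in that case. Apart from this check, each step is a direct appeal to a lemma already in hand, so I do not expect a genuine obstacle beyond bookkeeping.
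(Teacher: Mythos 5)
Your proof is correct and takes essentially the same route as the paper, which derives the corollary directly (``This immediately implies'') from Theorem~\ref{lem:isb3start} via Lemma~\ref{lem:cg2s}(2), the automorphism observation for odd~$m$, and Lemma~\ref{lem:seq}, with exactly your matching of $d\in\{3,5,7\}$ to first absolute differences $6,5,7$. Your disposal of the $\ell=4$, $d_1\in\{6,7\}$ exception by vacuousness (since $4\ell+3=19$ is prime) is right, and the same observation covers $\ell\le 2$, where Theorem~\ref{lem:isb3start} does not apply but the hypothesis on the order of~$x$ is also unsatisfiable.
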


\section{Strong Sequenceability}\label{sec:ss}

As usual, let ${\bf g} = (g_1, g_2, \ldots, g_k)$ be an arrangement of elements of~$G \setminus \{e \}$ with partial product sequence~${\bf h} = (h_0, h_1, h_2, \ldots, h_k)$.  Suppose the elements of~${\bf g}$ are distinct and let~$S = \{ g_1, g_2, \ldots, g_k \}$.

If the  elements $(h_1, \ldots h_k)$ are all distinct with~$h_k = h_0 = e$ then~${\bf h}$ is a {\em rotational directed $S$-terrace} for~$G$ and~${\bf g}$ is the associated~{\em  rotational $S$-sequencing} of~$G$.  If $|G |= n$ and~$k = n-1$ we get a rotational directed terrace and associated rotational sequencing of~$G$ as in Section~\ref{sec:largek}.  

A group is {\em strongly sequenceable} if for all $S \subseteq G \setminus \{e\}$ there is either an $S$-sequencing or a rotational $S$-sequencing (or both).  In an abelian group the element~$h_k$ is independent of the ordering and so we cannot have both an $S$-sequencing and a rotational $S$-sequencing.  However, in non-abelian groups it is possible to have both.  For example, the dihedral groups of order a multiple of~4 have both sequencings and rotational sequencings~\cite{Isbell90,Keedwell83,Li97}.  Alspach and Kalinowski, see~\cite{AL}, have posed the problem of determining which groups are strongly sequenceable.

As observed in~\cite{AL}, $D_6$ has neither a sequencing nor a rotational sequencing.   The same is true for the quaternion group~$Q_8$ \cite{FGM, Gordon61}.    Hence we have:

\begin{thm}\label{th:ss}
If $G$ has a subgroup isomorphic to either~$D_6$ or~$Q_8$, then~$G$ is not strongly sequenceable.
\end{thm}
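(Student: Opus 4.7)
The plan is to choose a witness set $S \subseteq G \setminus \{e\}$ demonstrating that $G$ fails strong sequenceability, and the natural choice is $S = H \setminus \{e\}$, where $H \leq G$ is the copy of $D_6$ or $Q_8$. Then $|S| = |H|-1$, so any $S$-sequencing of $G$ is an ordering of the non-identity elements of~$H$ whose $|H|$ partial products (including $h_0 = e$) must be distinct.

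The key observation is that the partial products are closed in $H$: since every $g_i$ lies in $H$, every $h_i = g_1 g_2 \cdots g_i$ lies in $H$ as well. Therefore the $|H|$ distinct partial products must constitute the entire set $H$, so the ordering is in fact a sequencing of~$H$ in the sense of Section~\ref{sec:largek}. By the quoted results of Gordon and of Friedlander--Gordon--Miller cited just before the statement, neither $D_6$ nor $Q_8$ admits a sequencing, giving a contradiction.

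The argument for rotational $S$-sequencings is essentially identical: if $(g_1, \ldots, g_{|H|-1})$ were a rotational $S$-sequencing of $G$, the partial products $h_1, \ldots, h_{|H|-1}$ would be distinct with $h_{|H|-1} = e$, and again all of them lie in $H$. Together with $h_0 = e$, this packages into exactly the data of a rotational sequencing of $H$ in the sense of Section~\ref{sec:largek}, which $D_6$ and $Q_8$ likewise lack. Hence for this particular $S$, neither form of sequencing exists in $G$, and $G$ is not strongly sequenceable.

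There is essentially no obstacle here; the proof is a one-line ``closure under the subgroup'' observation combined with the cited non-(rotational-)sequenceability of $D_6$ and $Q_8$. The only mild care needed is to state clearly that restricting the codomain of partial products to $H$ turns an $S$-(rotational-)sequencing of $G$ into an honest (rotational) sequencing of the subgroup $H$, because the set of partial products has the right cardinality to fill $H$ entirely.
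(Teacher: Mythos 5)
Your proposal is correct and is exactly the argument the paper intends: the paper simply records that $D_6$ and $Q_8$ have neither a sequencing nor a rotational sequencing and says ``hence,'' leaving implicit the closure observation that for $S = H \setminus \{e\}$ all partial products lie in $H$ and therefore an $S$-sequencing (resp.\ rotational $S$-sequencing) of $G$ would be a sequencing (resp.\ rotational sequencing) of $H$. You have merely written out that implicit step, so the two proofs coincide.
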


The other sets~$S$ that do not have $S$-sequencings discussed in Section~\ref{sec:smallk} either have rotational $S$-sequencings or do not imply that any further groups are not strongly sequenceable.

It is known that all abelian groups of order at most~23 and all cyclic groups of order at most~25 are strongly sequenceable~\cite{ADMS,CMPP18}.  By the remark at the end of Section~\ref{sec:polymeth} we can add~$D_{10}$ to this list.

\section*{Acknowledgements}

I am very grateful to John Schmitt (Middlebury College) for conversations about the polynomial method, his close attention to sections of this work and his encouragement.  I am also grateful to Kat Cannon-MacMartin (Marlboro College) for the programming used in the  proof of Theorem~\ref{th:polymeth} and to Brian Alspach (University of Newcastle) for sharing work on strong sequenceability.

\end{document}